\newcommand\red{\color{red}}
\numberwithin{equation}{section}
\newtheorem{theorem}{Theorem}[section]
\newtheorem{lemma}[theorem]{Lemma}
\newtheorem{proposition}[theorem]{Proposition}
\newtheorem{corollary}[theorem]{Corollary}
\theoremstyle{definition}
\newtheorem{definition}[theorem]{Definition}
\newtheorem{example}[theorem]{Example}
\newtheorem{remark}[theorem]{Remark}
\newtheorem{question}[theorem]{Question}
\newcommand{\be}{\begin{equation}}
\newcommand{\ee}{\end{equation}}
\newcommand{\bes}{\begin{equation*}}
\newcommand{\ees}{\end{equation*}}
\newcommand{\cH}{H}
\newcommand{\cK}{\mathcal{K}}
\newcommand{\cA}{\mathcal{A}}
\newcommand{\cB}{\mathcal{B}}
\newcommand{\cS}{\mathcal{S}}
\newcommand{\cI}{\mathcal{I}}
\newcommand{\cW}{\mathcal{W}}
\newcommand{\bC}{\mathbb{C}}
\newcommand{\bD}{\mathbb{D}}
\newcommand{\bR}{\mathbb{R}}
\newcommand{\bZ}{\mathbb{Z}}
\newcommand{\UCP}{\operatorname{UCP}}
\newcommand{\Wmin}[1]{\cW^{\text{min}}_{#1}}
\newcommand{\Wmax}[1]{\cW^{\text{max}}_{#1}}
\newcommand{\ol}{\overline}
\begin{document}

\title{Compressions of Compact Tuples}

\author{Benjamin Passer}

\address{Department of Pure Mathematics\\
University of Waterloo\\
Waterloo, ON\\
Canada}
\email{bpasser@uwaterloo.ca}

\author{Orr Moshe Shalit}
\address{Faculty of Mathematics\\
Technion - Israel Institute of Technology\\
Haifa\; 3200003\\
Israel}
\email{oshalit@technion.ac.il}

\thanks{The first author was partially supported by a 2016-2018 Zuckerman Fellowship at the Technion.
The second author was partially supported by Israel Science Foundation Grant no. 195/16.}

\subjclass[2010]{47A20, 47A13, 46L07, 47L25}
\keywords{Matrix convex set; matrix range; matrix extreme point; operator system; structure of compact tuples}
\begin{abstract}
We study the matrix range of a tuple of compact operators on a Hilbert space and examine the notions of {\em minimal}, {\em nonsingular}, and {\em fully compressed} tuples. 
In this pursuit, we refine previous results by characterizing nonsingular compact tuples in terms of matrix extreme points of the matrix range. Further, we find that a compact tuple $A$ is fully compressed if and only if it is multiplicity-free and the Shilov ideal is trivial, which occurs if and only if $A$ is minimal and nonsingular. 
Fully compressed compact tuples are therefore uniquely determined up to unitary equivalence by their matrix ranges. We also produce a proof of this fact which does not depend on the concept of nonsingularity.
\end{abstract}

\maketitle

\section{Background and statement of main results}

Let $A = (A_1, \ldots, A_d) \in \cB(H)^d$ be a $d$-tuple of bounded operators on a Hilbert space $H$.
We write $S_A$ for the operator system generated by $A$, and $\UCP(S_A, M_n)$ for the space of all unital completely positive maps from $S_A$ into the algebra $M_n = M_n(\bC)$ of $n \times n$ matrices.
The {\em matrix range} of $A$ is the matrix convex set $\cW(A) = \sqcup_{n=1}^\infty \cW_n(A) \subseteq \sqcup_{n=1}^\infty M_n^d$, where
\[
\cW_n(A) := \left\{\phi(A) : \phi \in \UCP(S_A, M_n) \right\} \subseteq M_n^d,
\]
and $\phi(A) :=(\phi(A_1), \ldots, \phi(A_d))$.
The matrix range of a single operator was introduced by Arveson in \cite{Arv70,Arv72}, and the matrix range of a $d$-tuple $A$ is an organic extension of the concept. In particular, it determines the operator space $S_A$ up to unit preserving and completely isometric isomorphism (see \cite[Theorem 2.4.2]{Arv72} or \cite[Theorem 5.1]{DDSSa}).
The matrix range has been used and studied in recent works, in the contexts of the UCP interpolation problem \cite{DDSS} (following \cite{HKM13}), finite-dimensional/compact representability of operator systems \cite{Passer,PSS18} (following \cite{FNT}), and extremal problems in matrix convex sets \cite{Eve18} (following \cite{EHKM16}).

The main purpose of this note is to explore the extent to which the matrix range determines a $d$-tuple up to unitary equivalence, under suitable assumptions.
We unify the treatment in \cite{DDSSa} and \cite{Passer} by considering the problem for tuples of compact operators, and separately for tuples of normal operators. It is clear that the matrix range does not detect multiplicity (e.g., $\cW(A) = \cW(A \oplus A)$), thus one needs to impose some kind of minimality condition.

\begin{definition}
A $d$-tuple $A \in \cB(H)^d$ is said to be {\em minimal} if there is no proper closed reducing subspace $G \subset H$ such that $\cW(P_G A |_G) = \cW(A)$.
\end{definition}

This notion of minimality was used in \cite[Section 6]{DDSS}, and in the finite-dimensional case it corresponds precisely to the notion of {\em minimal  pencil} used earlier in \cite{HKM13} (and to the notion of {\em $\sigma$-minimal pencil} used in \cite{Zalar}). Using matricial polar duality \cite{EW97}, one can show that the results of \cite{HKM13} imply that a minimal $d$-tuple of operators on a finite-dimensional space is determined up to unitary equivalence by its matrix range.
In \cite[Section 6]{DDSS}, this problem was treated in the wider setting of compact tuples. It was claimed there, mistakenly, that if $A$ and $B$ are two minimal tuples of compact operators, then $\cW(A) = \cW(B)$ if and only if $A$ is unitarily equivalent to $B$. This is false, as \cite[Example 3.14]{Passer} shows.  After the mistake was discovered, a corrected version of that paper appeared on the arXiv \cite{DDSSa}. 
The correct result, \cite[Theorem 6.9]{DDSSa}, shows that if two minimal tuples of compact operators have the same matrix range and also have a new property called \textit{nonsingularity}, then they are unitarily equivalent. The property was used as a somewhat ad hoc fix, so one of our goals here is to present different conditions which allow us to avoid the nonsingularity assumption.

In order define nonsingularity, we need to review some basic facts about representations of C*-algebras of compact operators (see \cite[Section I.10]{DavBook} for proofs of the facts stated in this paragraph). Let $\cK(H)$ denote the algebra of compact operators on a Hilbert space $H$. If $\cA$ is a C*-subalgebra of $\cK(H)$, then every representation of $\cA$ is the direct sum of irreducible representations, and every nonzero irreducible representation of $\cA$ is unitarily equivalent to a direct summand of the identity representation. It follows that if $A \in \cK(H)^d$, then $C^*(A)$ (which is not assumed unital) is given as a direct sum $C^*(A) = \oplus_{i\in I} \cA_i$, where for every $i\in I$, the algebra $\cA_i$ is either unitarily equivalent to $\cK(H_i) \otimes I_{K_i}$ for some Hilbert spaces $H_i$ and $K_i$, or $\cA_i = 0$. In particular, $A$ is the direct sum of irreducible compact $d$-tuples, some of which may be zero. If there are no two irreducible summands that are unitarily equivalent, we say that $A$ is {\em multiplicity-free}.

Recall that $S_A$ denotes the operator system generated by $A$.
Thus, the C*-algebra $C^*(S_A)$ generated by $S_A$ is just the unital C*-algebra generated by $A$.
Every nondegenerate representation of $C^*(A)$ extends uniquely to a unital representation of $C^*(S_A)$.
When $\dim H < \infty$, the irreducible representations of $C^*(S_A)$ are precisely the unitizations of irreducible subrepresentations of $C^*(A)$.  
When $\dim H = \infty$, $C^*(S_A)$ may have an additional kind of representation, {\em the singular representation} $\pi_0 : C^*(S_A) \to \bC$, determined by $\pi_0(I) = 1$ and $\pi_0(A_i)= 0$ for $i \in \{1, \ldots, d\}$.
The singular representation $\pi_0$ may or may not be equivalent to a subrepresentation of the identity representation.

We shall also require the theory of boundary representations and the C*-envelope \cite{Arv69,Arv72,ArvNote}.
Recall that a {\em boundary representation} for $S_A$ in $C^*(S_A)$ is an irreducible unital representation $\pi : C^*(S_A) \to \cB(H_\pi)$, such that $\pi$ is the unique UCP extension of $\pi\big|_{S_A}$ to $C^*(S_A)$.
An ideal $J \triangleleft C^*(S_A)$ is called a {\em boundary ideal} (for $S_A$) if the quotient map $C^*(S_A) \to C^*(S_A) / J$ is completely isometric on $S_A$.
The {\em Shilov ideal} is the largest boundary ideal, and the {\em C*-envelope} of $S_A$ is the quotient of $C^*(S_A)$ by the Shilov ideal.

The C*-envelope of $S_A$ can also be identified with the image of $C^*(S_A)$ under the sum of all boundary representations (see \cite[Theorem 7.1]{ArvChoquet1} or \cite[Theorem 3.4]{DavKen15}). Let us write $\partial_{A}$ for the collection of irreducible subrepresentations of the identity representation of $C^*(S_A)$ which are also boundary representations for $S_A$ in $C^*(S_A)$.
Thus, when $H$ is infinite dimensional, we may write
\[
C^*_e(S_A) \cong \sigma(C^*(S_A)) \oplus \bigoplus_{\pi \in \partial_A} \pi(C^*(S_A)),
\]
where $\sigma$ is $\pi_0$ if $\pi_0$ is a boundary representation, and $\sigma$ is the nil representation otherwise. Some of the summands might be redundant, since $\pi_0$ might be a boundary representation as well as a subrepresentation of the identity representation. That is, it is possible that $\pi_0 \in \partial_A$.
We can now finally give the definition of nonsingularity, and the corresponding uniqueness theorem.

\begin{definition}\label{def:nonsingular_intro} \cite[Definition 6.3]{DDSSa}
A tuple $A = (A_1,...,A_d) \in \cK(H)^d$ is said to be {\em nonsingular} if either $\dim H < \infty$, or $\dim H = \infty$ and for every $n$ and every matrix $(s_{ij}) \in M_n(S_A)$,
\be\label{eq:nonsingular_intro}
\|\pi_0(s_{ij})\| \leq \sup \left\{\|\pi(s_{ij})\| : \pi \in \partial_A \right\}.
\ee
Otherwise $A$ is said to be {\em singular}.
\end{definition}

\begin{theorem}\label{thm:quotingminsingunique} \cite[Theorem 6.9]{DDSSa}
Let $A$ and $B$ be two nonsingular and minimal $d$-tuples of compact operators.
Then $\cW(A) = \cW(B)$ if and only if $A$ is unitarily equivalent to $B$.
\end{theorem}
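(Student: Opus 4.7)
\medskip

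\noindent\textbf{Proof plan.} The forward direction is immediate, so I focus on showing that $\cW(A) = \cW(B)$ implies $A$ is unitarily equivalent to $B$. My plan is to recover each tuple as a direct sum of irreducibles from the combinatorial data of its C*-envelope, and then use nonsingularity and minimality to make that recovery canonical enough that the identification $\cW(A)=\cW(B)$ transports all the data.

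The identity $\cW(A) = \cW(B)$ produces a unital complete isometry $\phi : S_A \to S_B$ with $\phi(A_i) = B_i$, and by the universal property of the C*-envelope, $\phi$ lifts to a $*$-isomorphism $\Phi : C^*_e(S_A) \to C^*_e(S_B)$ sending $A_i + J_A$ to $B_i + J_B$, where $J_\bullet$ denotes the respective Shilov ideals. In particular $\pi \mapsto \pi \circ \Phi$ furnishes a bijection between boundary representations of $S_B$ and of $S_A$, preserving the evaluation data at the generators.

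Next I would use nonsingularity to peel off the singular summand from the envelope decomposition displayed just above the theorem: the inequality \eqref{eq:nonsingular_intro} forces $\bigoplus_{\pi \in \partial_A} \pi$ to be completely isometric on $S_A$, so its kernel is a boundary ideal and hence equals the Shilov ideal, yielding $C^*_e(S_A) \cong \bigoplus_{\pi \in \partial_A} \pi(C^*(S_A))$. Then I would use minimality to identify $\partial_A$ with the irreducible summands of $A$: writing $A = \bigoplus_k A^{(k)}$, each $A^{(k)}$ gives an irreducible subrepresentation $\pi^{(k)}$ of the identity representation. Minimality rules out duplicate $A^{(k)}$, since removing a duplicate leaves $\cW(A)$ unchanged. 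It also forces every $\pi^{(k)}\in\partial_A$: if some $\pi^{(k_0)}$ were not a boundary representation, then $\bigoplus_{\pi \in \partial_A}\pi$ would already be completely isometric on $S_A$ without involving the summand $H_{k_0}$, and dilation/compression arguments should produce the proper reducing subspace $H\ominus H_{k_0}$ on which the compression of $A$ still has matrix range $\cW(A)$.

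Finally, the bijection induced by $\Phi$ matches each $\pi_A^{(k)}$ with some $\pi_B^{(\ell(k))}$; since $\Phi$ sends generators to generators, this matching restricts to a unitary equivalence $A^{(k)} = \pi_A^{(k)}(A) \sim_u \pi_B^{(\ell(k))}(B) = B^{(\ell(k))}$, and assembling the pieces gives $A \sim_u B$. The step I expect to be the main obstacle is showing that every irreducible summand gives a boundary representation: this requires converting the abstract failure of the boundary property into a concrete reducing subspace contradicting minimality, and it must absorb the technical nuisances that the irreducible summands can themselves be infinite dimensional and that $\pi_0$ can be a boundary representation even when it is dominated in norm.
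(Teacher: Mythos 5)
Note first that the paper does not prove this statement: it is imported verbatim from \cite[Theorem 6.9]{DDSSa}, so there is no in-paper proof to compare against. That said, your plan is sound and is essentially the argument of the cited proof, and it is also the same machine the paper itself runs in Section 4 (Lemma \ref{lem:OGequivalence_fc_redundant} and Corollary \ref{cor:finaluniqueness_main}, with \quoted{fully compressed} in place of \quoted{minimal and nonsingular}): lift the complete isometry to a $*$-isomorphism of C*-envelopes, show the envelope is realized by $\partial_A$, and match irreducible summands. The step you flag as the main obstacle closes more easily than you fear, and without any dilation argument: if $A=\bigoplus_k A^{(k)}$ is the decomposition into irreducibles and $\pi^{(k_0)}\notin\partial_A$, then every $\pi\in\partial_A$ is (up to unitary equivalence) the compression of the identity representation to some $H_i$ with $i\neq k_0$, hence factors as a UCP map through the compression of $S_A$ to the \emph{reducing} subspace $G=H\ominus H_{k_0}$; combined with your step that $\bigoplus_{\pi\in\partial_A}\pi$ is completely isometric on $S_A$ (which is where nonsingularity enters, since the only boundary representation possibly missing from $\partial_A$ is $\pi_0$), this makes the compression to $G$ completely isometric, hence matrix-range preserving, contradicting minimality directly. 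Two housekeeping points you should make explicit when writing this up: (i) once every $\pi^{(k)}$ lies in $\partial_A$ and $A$ is multiplicity-free, the kernel of $\bigoplus_k\pi^{(k)}$ is zero, so the Shilov ideal is trivial and $\Phi$ is a unital $*$-isomorphism $C^*(S_A)\to C^*(S_B)$ carrying $C^*(A)$ onto $C^*(B)$; the unitary is then assembled summand by summand from the induced bijection on irreducible representations. (ii) The zero-summand/$\pi_0$ nuisances are controlled by the observation (Lemma \ref{lem:findimminfc}) that a minimal compact tuple with a zero summand acts on a finite-dimensional space, and by the fact that $\Phi(C^*(A))=C^*(B)$ detects whether the identity lies in the nonunital algebra, so $A$ has a zero summand if and only if $B$ does.
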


We extend and simplify the results in \cite{DDSSa} in two separate ways. First, we identify singular and nonsingular compact tuples by studying compressions, summands, and matrix extreme points of the matrix range, as in Proposition \ref{prop:zerocompression}, Theorem \ref{thm:char_nonsing}, and Theorem \ref{thm:minsingchar}. Further, our results allow us to conclude that the examples considered by Evert in \cite{Eve18} are nonsingular. 
We also consider a different minimality condition, opting to discuss arbitrary compressions instead of compressions to reducing subspaces.

\begin{definition}\label{def:fullcom} \cite[Definition 3.20]{Passer}
A $d$-tuple $A \in \cB(H)^d$ is said to be {\em fully compressed} if there is no proper closed subspace $G \subset H$ such that $\cW(P_G A|_G) = \cW(A)$.
\end{definition}

Proposition 3.21 of \cite{Passer} gives an extremely restrictive uniqueness theorem for fully compressed compact tuples, as a result of direct computations. Namely, if $T$ is a fully compressed $d$-tuple of compact operators, and $\cW(T)$ is a matrix convex set which is generated by its first level, then $T$ is uniquely determined up to unitary equivalence. The question of whether the uniqueness result persists without the assumption about the first level was left open. In pursuit of this result, we characterize fully compressed compact tuples and nonsingular compact tuples in terms of the $C^*$-envelope, extending the results of \cite{DDSSa}. 
Our main theorem (Theorem \ref{thm:roundrobin_body}) is as follows.

\begin{theorem}\label{thm:roundrobin_intro}
Let $A \in \cK(H)^d$ be a tuple of compact operators. Then the following are equivalent.
\begin{enumerate}
\item\label{it:fullcomfirst} $A$ is fully compressed.
\item\label{it:mfShfirst} $A$ is multiplicity-free, and the Shilov ideal of $S_A$ in $C^*(S_A)$ is trivial.
\item\label{it:msfirst} $A$ is minimal and nonsingular.
\end{enumerate}
\end{theorem}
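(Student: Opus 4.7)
The plan is to establish the three-way equivalence cyclically via $(1) \Rightarrow (2) \Rightarrow (3) \Rightarrow (1)$.

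For $(1) \Rightarrow (2)$, I would argue by contrapositive. If $A$ fails to be multiplicity-free, then two unitarily equivalent irreducible summands yield a reducing subspace which can be discarded without changing the matrix range, so $A$ is not fully compressed. If instead the Shilov ideal of $S_A$ in $C^*(S_A)$ is nontrivial, then some irreducible subrepresentation $\pi_i$ of the identity on $H$ lacks the unique extension property. The alternate UCP extension of $\pi_i|_{S_A}$ to $C^*(S_A)$ can be harnessed to realize $\pi_i$ as a compression involving only the remaining summands, producing a proper subspace $G \subsetneq H$ with $\cW(P_G A|_G) = \cW(A)$ in which the $H_i$-block has been removed.

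For $(2) \Rightarrow (3)$, when $A$ is multiplicity-free and the Shilov ideal is trivial, every irreducible subrepresentation of the identity must be a boundary representation (otherwise the corresponding simple summand would sit inside the Shilov ideal). This forces minimality, since dropping any reducing irreducible summand removes the contribution of an essential boundary representation, which strictly shrinks $\cW(A)$ by the close connection between boundary representations and matrix-extremal data. Nonsingularity follows because a trivial Shilov ideal makes the boundary representations collectively completely isometric on $C^*(S_A)$, so the supremum over $\partial_A$ computes the $C^*$-norm of $(s_{ij})$ and in particular dominates $\|\pi_0(s_{ij})\|$; the corner case where $\pi_0$ is itself a boundary representation outside $\partial_A$ should be ruled out using the concrete structure of compact $C^*$-algebras combined with multiplicity-freeness.

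For $(3) \Rightarrow (1)$, I would leverage Theorem \ref{thm:quotingminsingunique}. Suppose for contradiction there is a proper subspace $G \subset H$ with $\cW(P_G A|_G) = \cW(A)$. A preliminary reduction lemma, to be established separately in the body, should show that every compact tuple admits a reducing compression which is both minimal and nonsingular and has the same matrix range. Applying this to $P_G A|_G$ yields such an $A'$ acting on some $G' \subseteq G$ with $\cW(A') = \cW(A)$. Theorem \ref{thm:quotingminsingunique} then forces $A$ unitarily equivalent to $A'$, contradicting the fact that $A'$ lives on a subspace of $G \subsetneq H$.

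The main obstacle is the compression construction in $(1) \Rightarrow (2)$: converting the purely representation-theoretic failure of unique extension for $\pi_i$ into an explicit, geometric proper compression of $A$ requires delicate use of Stinespring dilations and the ideal structure of compact operators, together with a careful accounting of how the singular representation $\pi_0$ interacts with the $\pi_i$ pieces. A related but secondary hurdle is the minimal-nonsingular reduction lemma underlying $(3) \Rightarrow (1)$, which must simultaneously produce both properties on a reducing subspace while preserving the matrix range.
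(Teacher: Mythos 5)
Your cyclic order $(1)\Rightarrow(2)\Rightarrow(3)\Rightarrow(1)$ differs from the paper's (which runs $(1)\Rightarrow(3)\Rightarrow(2)\Rightarrow(1)$, using the structure theorem for singular tuples, quoting $(3)\Rightarrow(2)$ from \cite{DDSSa}, and proving $(2)\Rightarrow(1)$ via a Sarason semi-invariance argument), and two of your three arrows have genuine gaps. In $(1)\Rightarrow(2)$, the construction you describe --- discard the blocks $H_i$ corresponding to non-boundary subrepresentations and compress to what remains --- fails in exactly the case you flag as ``the main obstacle'': if the non-boundary part $G_0$ is infinite-dimensional while the boundary part $G_1$ is finite-dimensional, then the C*-envelope is $\bC\oplus\bigoplus_{i\in I_1}\cK_i$ and the image of $A$ is $0\oplus X$; the compression to $G_1$ alone has matrix range $\cW(X)$, which may omit the point $0$ supplied by the singular representation, so it does \emph{not} equal $\cW(A)$. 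The fix is not to remove the $G_0$-block but to shrink it: compress to $F\oplus G_1$ with $F$ a proper infinite-dimensional subspace of $G_0$, and sandwich $\cW(0\oplus X)\subseteq\cW(B)\subseteq\cW(A)=\cW(0\oplus X)$. You identify the difficulty but do not resolve it, and your stated construction is wrong in precisely that case.

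The more serious problem is $(3)\Rightarrow(1)$. First, your ``preliminary reduction lemma'' --- every compact tuple admits a reducing compression that is minimal, nonsingular, and has the same matrix range --- is false: for $A=\operatorname{diag}(1,1/2,1/3,\dots)$ every summand with matrix range $\Wmin{}([0,1])$ is an infinite diagonal tuple from which one can always delete another eigenvalue, so no minimal summand exists at all, and every such summand is singular ($0$ is a crucial matrix extreme point and no finite-dimensional summand detects it). The lemma you need is \cite[Corollary 6.8]{DDSSa}, which applies to \emph{nonsingular} tuples; to invoke it for $B=P_GA|_G$ you must first show $B$ is nonsingular, which is not automatic because nonsingularity is not an invariant of the matrix range (the singular $\operatorname{diag}(1,1/2,\dots)$ and the nonsingular $\operatorname{diag}(0,1)$ have the same matrix range); here Theorem \ref{thm:char_nonsing} can be used to transfer nonsingularity across the equality $\cW(B)=\cW(A)$ when $A$ is infinite-dimensional. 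Second, even after obtaining $A'\cong A$ on $G'\subseteq G\subsetneq H$, there is no contradiction yet: unitary equivalence does not see the ambient space, and $G'$ may well be infinite-dimensional like $H$. One must upgrade this to a contradiction by noting that $A\mapsto A'=P_{G'}A|_{G'}$ extends to a $*$-isomorphism of $C^*(S_A)$, so by Sarason's lemma $G'$ is semi-invariant, hence reducing, for the C*-algebra, and only then does minimality of $A$ force $G'=H$. This semi-invariance step is the crux of the paper's Lemma \ref{lem:OGequivalence_fc_redundant} and is absent from your sketch. (Your $(2)\Rightarrow(3)$, by contrast, is essentially salvageable: when $0$ is not a summand the identity representation is the direct sum of the $\pi_i$, all of which lie in $\partial_A$ by triviality of the Shilov ideal, and $\|\pi_0(s_{ij})\|\leq\|(s_{ij})\|$ simply because $\pi_0$ is a $*$-representation; the $0$-summand case is covered by Proposition \ref{prop:DDSSlist} and Lemma \ref{lem:findimminfc}.)
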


The equivalence of (\ref{it:mfShfirst}) and (\ref{it:msfirst}) is a direct improvement of \cite[Proposition 6.7]{DDSSa}, which only provided a partial version of $(\ref{it:mfShfirst}) \implies (\ref{it:msfirst})$. We also immediately obtain the following corollary of Theorems \ref{thm:quotingminsingunique} and \ref{thm:roundrobin_intro}, which appears as Corollary \ref{cor:finaluniqueness_main}.

\begin{corollary}\label{cor:uniquenessfc_intro}
Let $A$ and $B$ be two fully compressed $d$-tuples of compact operators.
Then $\cW(A) = \cW(B)$ if and only if $A$ is unitarily equivalent to $B$.
\end{corollary}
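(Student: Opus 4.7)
The plan is to derive the corollary as an immediate consequence of Theorems \ref{thm:quotingminsingunique} and \ref{thm:roundrobin_intro}. One implication is routine and requires no appeal to either theorem: if $A$ and $B$ are unitarily equivalent via a unitary $U$ with $U A_i U^* = B_i$, then conjugation by $U$ restricts to a unital complete order isomorphism $S_A \to S_B$, which induces a natural bijection $\UCP(S_B, M_n) \to \UCP(S_A, M_n)$ via $\phi \mapsto \phi \circ \mathrm{Ad}\,U$ that takes $B$ to $A$, so $\cW_n(A) = \cW_n(B)$ for every $n$.

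For the substantive direction, suppose $\cW(A) = \cW(B)$ and both $A$ and $B$ are fully compressed tuples of compact operators. Applying the equivalence (\ref{it:fullcomfirst}) $\Leftrightarrow$ (\ref{it:msfirst}) of Theorem \ref{thm:roundrobin_intro} to each tuple separately, we learn that $A$ and $B$ are both minimal and nonsingular. These are exactly the hypotheses of Theorem \ref{thm:quotingminsingunique}, which then yields unitary equivalence of $A$ and $B$ from the equality of their matrix ranges.

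There is no real obstacle to confront in this corollary: the technical content is entirely absorbed by Theorem \ref{thm:roundrobin_intro}, whose role is precisely to convert the intrinsic, geometric hypothesis \emph{fully compressed} into the two conditions \emph{minimal} and \emph{nonsingular} on which the previously available uniqueness theorem \cite[Theorem 6.9]{DDSSa} depends. Thus the plan is simply to record the two-step chain of implications and to note that the converse direction uses neither theorem.
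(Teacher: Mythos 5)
Your proposal is correct, and it is in fact one of the two derivations the paper itself acknowledges: the introduction states that the corollary is ``immediately obtained'' from Theorems \ref{thm:quotingminsingunique} and \ref{thm:roundrobin_intro}, and a remark after Theorem \ref{thm:roundrobin_body} repeats this. However, the proof the paper actually writes out (for Corollary \ref{cor:finaluniqueness_main}, the body version of the same statement) deliberately takes a different route that avoids nonsingularity altogether: it uses only the equivalence of ``fully compressed'' with ``multiplicity-free and trivial Shilov ideal'' (Lemma \ref{lem:OGequivalence_fc_redundant}), promotes the completely isometric isomorphism $S_A \to S_B$ to a $*$-isomorphism $C^*(S_A) = C^*_e(S_A) \to C^*_e(S_B) = C^*(S_B)$, and then argues via the representation theory of C*-algebras of compact operators that this isomorphism is a sum of subrepresentations of the identity, each occurring exactly once, hence is implemented by a unitary (with a short separate argument for the case where $0$ is a summand). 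The authors' stated motivation for preferring that route is that nonsingularity is defined only for compact tuples, whereas ``fully compressed'' makes sense in general, so the direct argument is more open to generalization. Your route is shorter and perfectly valid given Theorem \ref{thm:roundrobin_intro}, but it inherits the full weight of the nonsingularity machinery (Theorem \ref{thm:quotingminsingunique} and the implication $(\ref{it:fullcomfirst}) \Rightarrow (\ref{it:msfirst})$, which rests on Theorems \ref{thm:char_nonsing} and \ref{thm:minsingchar}); the paper's direct proof buys independence from that machinery at the cost of redoing the representation-theoretic bookkeeping.
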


We approach the above results from two points of view. 
First, just as the introduction of nonsingularity may be used to patch the errors in \cite{DDSS}, so too may one consider fully compressed compact tuples instead of minimal ones. 
Thus, we provide proofs of the equivalence $(\ref{it:fullcomfirst}) \iff (\ref{it:mfShfirst})$ in Theorem \ref{thm:roundrobin_intro}, as well as of Corollary \ref{cor:uniquenessfc_intro}, which do not rely on the notion of nonsingularity. 
We believe this is of interest, as fully compressed tuples need not be compact (unlike nonsingular tuples), so the results are potentially open to generalization. 
However, we also find that by using the concepts of fully compressed tuples and nonsingularity in tandem, we may prove all of Theorem \ref{thm:roundrobin_intro} and unify previous results, thereby making it easier to detect tuples which meet the (equivalent) conditions. 

Finally, we close with some brief discussion of tuples which are not necessarily compact. 
In particular, in Theorem \ref{thm:cmdm_normal} we prove that a normal tuple is fully compressed if and only if it is minimal. 
Using an earlier result \cite[Theorem 3.26]{Passer}, we are thus able to give a complete description of all fully compressed normal tuples.

\section{Matrix convexity and extreme points}

The sets considered in the theory of matrix convexity are the ``free sets". 
For fixed $d \in \bZ^+$, we consider subsets of the form $\cS = \sqcup_{n=1}^\infty \cS_n$ contained in $\sqcup_{n=1}^\infty M_n^d$, where for every $n$ the set $\cS_n$ consists of $d$-tuples of $n \times n$ matrices. 
Below, we shall refer to $\cS_n$ as the {\em $n$th level} of $\cS$. 
A free set $\cS$ is said to be {\em matrix convex} if for every $X \in \cS_m, Y \in \cS_n$,
\[
X \oplus Y \in \cS_{m+n},
\]
and in addition, for every $\phi \in \UCP(M_m,M_k)$,
\[
\phi(X) := (\phi(X_1), \ldots, \phi(X_d)) \in \cS_k.
\]
We say that a matrix convex set $\cS$ is {\em closed}/{\em bounded} if every level $\cS_n$ is closed/bounded, and we note that if $\cS$ is bounded, there is actually a uniform norm bound that applies simultaneously to each $\cS_n$. If $T \in \cB(H)^d$ is a tuple of bounded operators, then the matrix range $\cW(T)$ is a closed and bounded matrix convex set, and in fact every closed and bounded matrix convex set arises this way \cite[Section 2.2]{DDSS}.

Matrix convexity is defined above in reference to UCP maps. From Choi's theorem (see \cite{Choi75}), a concrete version immediately follows. First, for $X = (X_1, \ldots, X_d)\in M_n^d$ and $V \in M_{n,m}$, we write
\[
V^*X V = (V^*X_1 V , \ldots, V^* X_d V) \in M_m^d.
\]
If $X^{(i)} \in \cS_{n_i}$ and $V_i \in M_{n_i,n}$ satisfy $\sum_{i=1}^k V_i^* V_i = I_n$, then the sum $\sum_{i=1}^k V_i^* X^{(i)} V_i$ is called a {\em matrix convex combination} of the $X^{(i)}$. The matrix convex combination is said to be {\em proper} if $\operatorname{rank} V_i = n_i$ for all $i$, and {\em weakly proper} if $V_i \neq 0$ for all $i$. 
A free set $\cS$ is matrix convex if and only if it is closed under matrix convex combinations.

For compact convex sets $K \subseteq \bC^d$, the Krein-Milman theorem and Milman's converse show that $K$ is the closed convex set generated by the set of extreme points, and that the set of extreme points is minimal with respect to this property. An analogous study of extreme points for matrix convex sets is more complicated, as there are multiple relevant notions of extreme point to consider.

\begin{definition}
Let $\cS$ be a matrix convex set.
A point $X \in \cS$, say $X \in \cS_n$, is said to be
\begin{enumerate}
\item a {\em Euclidean extreme point of $\cS$} if $X = tY + (1-t)Z$ with $t \in (0,1)$, $Y,Z \in \cS_n$ implies $X = Y = Z$;
\item a {\em matrix extreme point (MEP) of $\cS$} if whenever $X$ is written as a proper matrix convex combination $X = \sum_{i=1}^k V_i^* X^{(i)} V_i$, then $X^{(i)}$ is unitarily equivalent to $X$ for all $i$;
\item an {\em absolute extreme point (AEP) of $\cS$} if whenever $X$ is written as a weakly proper matrix convex combination $X = \sum_{i=1}^k V_i^* X^{(i)} V_i$, then for all $i$, the tuple $X^{(i)}$ is unitarily equivalent to $X$ or to a direct sum $X \oplus Z_i$ for some $Z_i \in \cS$.
\end{enumerate}
\end{definition}

For any convex set $K$, we use $\text{ext}(K)$ to denote the set of extreme points of $K$. In particular, if $\cS$ is matrix convex, then $\text{ext}(\cS_n)$ consists of the Euclidean extreme points of $\cS$ which lie in level $n$. We will also let $\text{MEP}(\cS)$ and $\text{AEP}(\cS)$, respectively, denote the set of matrix extreme and absolute extreme points of a matrix convex set $\cS$.

In the first level $\cS_1$ of a matrix convex set, there is no distinction between Euclidean extreme points and matrix extreme points: $\text{MEP}(\cS) \cap \cS_1 = \text{ext}(\cS_1)$. Further, Webster and Winkler proved a matricial Krein-Milman theorem \cite[Theorem 4.3]{WW99}, which says that if $\cS$ is a closed and bounded matrix convex set, then the closed matrix convex hull of $\text{MEP}(\cS)$ is $\cS$. 
However, the set of matrix extreme points is not necessarily minimal. 
For closed and bounded real free spectrahedra (that is, matrix convex sets defined by a linear inequality), absolute extreme points are a minimal spanning set \cite[Theorem 1.1]{EH18}. 
On the other hand, there are closed and bounded matrix convex sets which have no absolute extreme points at all \cite[Theorem 1.2]{Eve18}.

Let $K \subset \bC^d$ be compact and convex. If $\cS$ is a matrix convex set with $\cS_1 = K$, then $\mathcal{S}$ sits between two extremal sets, which we denote as in \cite{DDSS}. First,
\be\label{eq:Wmindefintro}
\Wmin{}(K) := \{ M \in \sqcup_{n=1}^\infty M_n^d: M \text{ has a normal dilation with joint spectrum in } K\}
\ee
is the matrix convex hull of the compact convex set $K$. We remind the reader that a tuple $T \in \cB(H)^d$ is called \textit{normal} if $T$ consists of commuting normal operators, and that \cite[Corollary 4.4]{DDSS} shows that $\Wmin{}(K)$ is the matrix range of any normal tuple $N$ whose joint spectrum satisfies $\text{conv}(\sigma(N)) = K$. Second,
\be
\Wmax{}(K) := \{ M \in \sqcup_{n=1}^\infty M_n^d: \cW_1(M) \subseteq K\}
\ee
is the largest matrix convex set whose first level is $K$. These two sets are equal precisely when $K$ is a simplex by \cite[Theorem 4.1]{PSS18} (see also \cite[Theorem 4.7]{FNT} for a similar result with the assumption that $K$ is a polytope).

In \cite{Kriel}, Kriel considers the extremal matrix convex sets whose level $n$ is specified, where $n$ is any fixed positive integer. In particular, \cite[Corollary 6.12]{Kriel} implies that if a closed and bounded matrix convex set $\cS$ of self-adjoints is equal to the matrix convex hull of $\cS_n$ for some $n$, then the absolute extreme points of $\cS$ are a minimal spanning set. 
A crucial aspect of the proof is the fact that matrix extreme points are either absolute extreme points or admit nontrivial matrix extreme dilations.

\begin{theorem}\cite[Lemma 6.11]{Kriel} \label{thm:Krielboundedlevel}
Let $\mathcal{S} \subset \sqcup_{n=1}^\infty (M_n^d)_{sa}$ be a closed and bounded matrix convex set of self-adjoints, and let $X \in \cS_m$ be a matrix extreme point of $\cS$. 
Then either $X$ is an absolute extreme point of $\cS$, or there is a matrix extreme point of $\cS$ which is of the form $\begin{pmatrix} X & b \\ b^* & c \end{pmatrix}$ for some $b \in (\bC^m)^d \setminus \{0\}$ and $c \in (\bR^m)^d$.
\end{theorem}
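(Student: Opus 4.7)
The approach is in two stages. First, translate the failure of absolute extremity into the existence of a nontrivial self-adjoint one-row/column dilation of $X$ lying in $\cS$. Second, promote that dilation to a matrix extreme point of $\cS$ via a Krein--Milman argument inside the compact convex ``slice'' of all such one-row/column dilations. For the first stage, since $X$ is matrix extreme but not absolutely extreme, the failure of the AEP condition can be recast in dilation-theoretic form: there exist $Y \in \cS_{m+n}$ and an isometry $\iota : \bC^m \hookrightarrow \bC^{m+n}$ with $\iota^* Y \iota = X$ such that $Y$ is not unitarily equivalent to any direct sum $X \oplus Z$. Taking $n$ minimal, and peeling off any $X$-summand already sitting inside $Y$, reduces to $n = 1$, yielding $b \ne 0$ and a self-adjoint tuple $c$ with $\widetilde{X} = \begin{pmatrix} X & b \\ b^* & c \end{pmatrix} \in \cS_{m+1}$.

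For the second stage, consider the slice
\[
K = \left\{ (b', c') : \begin{pmatrix} X & b' \\ (b')^* & c' \end{pmatrix} \in \cS_{m+1} \right\},
\]
which is closed, bounded (using boundedness of $\cS$), and convex, hence compact. Its subset $K_0 = \{(0, c') \in K\}$ is a face, and by the previous stage $K \not\subseteq K_0$, so the Krein--Milman theorem produces an extreme point $(b, c) \in K$ with $b \ne 0$. Set $Y = \begin{pmatrix} X & b \\ b^* & c \end{pmatrix}$. To show $Y$ is matrix extreme, write $Y = \sum_i V_i^* Y^{(i)} V_i$ as a proper matrix convex combination in $\cS$, and compress to the first $m$ coordinates via the inclusion $E : \bC^m \hookrightarrow \bC^{m+1}$ to obtain $X = \sum_i (V_i E)^* Y^{(i)} (V_i E)$. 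Matrix extremity of $X$, after discarding the $V_i E = 0$ summands and passing to the ranges of the remaining $V_i E$'s, forces each surviving $Y^{(i)}$ to be unitarily equivalent to a one-row/column dilation of $X$, thus corresponding to a point $(b_i, c_i) \in K$. Unpacking the off-diagonal identity then expresses $(b, c)$ as a convex combination of the $(b_i, c_i)$ inside $K$, and extremity of $(b, c)$ in $K$ forces $(b_i, c_i) = (b, c)$ for all $i$. A final unitary bookkeeping step upgrades these coincidences to $Y^{(i)} \cong Y$, establishing matrix extremity.

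The main obstacle is the conversion, at the very end, of compression-level unitary equivalences (produced by matrix extremity of $X$) into full $(m+1)$-dimensional unitary equivalences between $Y$ and each $Y^{(i)}$, together with the verification that the induced convex decomposition really lies inside $K$ with the correct coefficients. The self-adjointness hypothesis and the one-row/column structure of the dilation are what keep this bookkeeping finite-dimensional and tractable, since in this setting the unitary freedom in the dilation of $X$ inside $Y^{(i)}$ is controlled by a single scalar row/column rather than an arbitrary block.
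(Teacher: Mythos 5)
First, note that the paper does not actually prove this statement: it is quoted from Kriel (Lemma 6.11), with the remark that it also follows from Davidson--Kennedy (Lemma 2.3) combined with Farenick's Theorem B. So your attempt has to stand on its own. Your two-stage plan is the right one and parallels the Davidson--Kennedy route: Stage 1 (failure of absolute extremity gives a one-row self-adjoint dilation with $b\neq 0$) is correct modulo the standard equivalence between absolute extremity and the absence of nontrivial dilations, and the reduction to a one-dimensional extension by compression is fine. In Stage 2 the set $K$ is indeed compact and convex, and Krein--Milman does produce an extreme point $(b,c)$ with $b\neq 0$ (your claim that $K_0$ is a face is false in general --- $tb_1+(1-t)b_2=0$ does not force $b_1=b_2=0$ --- but you do not need it, since all extreme points lying in the closed convex set $K_0$ would force $K\subseteq K_0$).

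The genuine gap is exactly the step you defer as ``unitary bookkeeping.'' Write $Y=\sum_i V_i^*Y^{(i)}V_i$ properly and set $W_i=V_iE$. Matrix extremity of $X$ applied to $X=\sum_i W_i^*Y^{(i)}W_i$ only yields that the compression $Z^{(i)}$ of $Y^{(i)}$ to $\operatorname{ran}W_i$ satisfies $u_i^*Z^{(i)}u_i=X$ for \emph{some} unitary $u_i$; factoring $W_i$ through its range gives invertible $T_i\in M_m$ with $\sum_iT_i^*XT_i=X$ and $\sum_iT_i^*T_i=I_m$, and nothing in the hypotheses forces the $T_i$ to be scalars. That rigidity is equivalent to saying that the identity representation of $M_m$ is the unique UCP map fixing $S_X$, i.e.\ that $X$ is an Arveson/absolute extreme point of its own matrix range $\cW(X)$ --- precisely the kind of hypothesis that can fail in the regime you are in (e.g.\ for matrix extreme points of Evert's sets, which have no absolute extreme points at all). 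Without $T_i$ scalar, the identity $b=\sum_iW_i^*Y^{(i)}v_i$ (where $v_i=V_ie_{m+1}$) does not reorganize into a convex combination $\sum_i\lambda_i(b_i,c_i)$ of points of $K$, because the candidate summands have top-left block $W_i^*Y^{(i)}W_i$ twisted by non-scalar $T_i$ rather than $X$ itself, so the extremity of $(b,c)$ in $K$ cannot be invoked. (A secondary point: the summands with $V_iE=0$ are necessarily $1\times1$ and can never be unitarily equivalent to $Y$, so they must be shown not to occur, not ``discarded.'') The known way to close this is to replace matrix extremity by its equivalent purity formulation (Farenick, Theorem B): a decomposition $\psi=\psi_1+\psi_2$ in the CP order compresses to the corner term by term, so purity of the compressed map genuinely controls the pieces, and the extreme-point-of-the-dilation-set argument then goes through; this is the content of Davidson--Kennedy's Lemma 2.3. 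As written, the step you label the ``main obstacle'' is not bookkeeping --- it is the entire content of the lemma.
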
 

While the above result is stated for self-adjoints, we may easily obtain a corresponding result in the general case by breaking a tuple into real and imaginary parts. Alternatively, the result can also be obtained by a combination of \cite[Lemma 2.3]{DavKen15} and \cite[Theorem B]{Far00}.

Below we show that under certain geometric conditions, Euclidean extreme points can automatically be absolute extreme points. Recall that from the classical Krein-Milman theorem and Milman's converse, it follows that if $K \subset \bC^d$ is a compact convex set, then any point $\lambda \in K$ satisfies
\be\label{eq:isolationEEP}
\lambda \text{ is an isolated extreme point of } K \iff \lambda \not\in \overline{\text{conv}( \text{ext}(K) \setminus \{\lambda\})}.
\ee
We write
\bes
\cI_K := \{\lambda \in \text{ext}(K): \lambda \text{ is isolated in } \text{ext}(K) \}
\ees
and note that any point $\lambda \in \cI_K$ is the vertex of some polytope which contains $K$.

\begin{proposition}\label{prop:isolatedEEPAEP}
Suppose $\cS$ is a closed and bounded matrix convex set with $\cS_1 = K$. If there is a polytope $P$ such that $P$ contains $K$ and $\lambda \in K$ is a vertex of $P$, then $\lambda$ is an absolute extreme point of $\cS$. Consequently, if $\lambda$ is an isolated extreme point of $K$, then $\lambda$ is an absolute extreme point of $\mathcal{S}$.
\end{proposition}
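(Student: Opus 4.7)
The plan is to lift the real-affine functionals that witness $\lambda$ as a vertex of $P$ to every level of $\cS$, and then to extract a joint eigenvector from the resulting operator inequalities. Concretely, since $\lambda$ is a vertex of $P$ in $\bC^d\cong\bR^{2d}$, I fix real-affine functionals $L_j(z)=\re(\alpha_j\cdot(z-\lambda))$ for $j=1,\dots,m$, with $L_j\le 0$ on $P$, $L_j(\lambda)=0$, and with $\{\alpha_j\}$ spanning $\bC^d$ as a real vector space (the geometric content of ``vertex''); here $\alpha\cdot z:=\sum_k\alpha_kz_k$. Each $L_j$ extends to the self-adjoint matrix pencil
\[
\widetilde L_j(X)\;:=\;\tfrac12\sum_{k=1}^{d}\bigl(\alpha_{j,k}(X_k-\lambda_kI)+\overline{\alpha_{j,k}}(X_k-\lambda_kI)^*\bigr),
\]
and for any unit vector $v$ in the underlying space of $X\in\cS$, $\langle v,\widetilde L_j(X)v\rangle=L_j(v^*Xv)\le 0$ because $v^*Xv\in\cW_1(X)\subseteq K\subseteq P$. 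Hence $\widetilde L_j(X)\le 0$ throughout $\cS$.

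Next, let $\lambda=\sum_{i=1}^N V_i^*X^{(i)}V_i$ be a weakly proper matrix convex combination, with $V_i\in M_{n_i,1}\setminus\{0\}$, $X^{(i)}\in\cS_{n_i}$, and $\sum_iV_i^*V_i=1$. A short computation, using that $\widetilde L_j$ is a fixed linear expression in $X$ and $X^*$ plus a scalar multiple of the identity and that $L_j(\lambda)=0$, gives
\[
\sum_{i=1}^N V_i^*\widetilde L_j(X^{(i)})V_i \;=\; \widetilde L_j(\lambda)\;=\;0.
\]
Each summand is a nonpositive scalar, so each vanishes; combined with $\widetilde L_j(X^{(i)})\le 0$ this forces $\widetilde L_j(X^{(i)})V_i=0$ for every $i,j$.

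Finally, for any $\beta\in\bC^d$ write $\beta=\sum_jt_j\alpha_j$ with $t_j\in\bR$, so $\bar\beta=\sum_jt_j\overline{\alpha_j}$. Taking the corresponding real combination of the relations $\widetilde L_j(X^{(i)})V_i=0$ yields
\[
\sum_{k=1}^d\bigl(\beta_k(X_k^{(i)}-\lambda_kI)+\bar\beta_k(X_k^{(i)}-\lambda_kI)^*\bigr)V_i\;=\;0
\]
for every $\beta\in\bC^d$. Specializing $\beta=e_k$ and $\beta=ie_k$ separates the two sums and delivers both $(X_k^{(i)}-\lambda_kI)V_i=0$ and $(X_k^{(i)}-\lambda_kI)^*V_i=0$ for each $k$. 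Therefore $\spn(V_i)$ is a one-dimensional reducing subspace of $X^{(i)}$ on which it acts as $\lambda$; compressing to the orthogonal complement (a UCP map) produces a $Z_i\in\cS$ for which $X^{(i)}$ is unitarily equivalent to $\lambda\oplus Z_i$, or to $\lambda$ itself when $n_i=1$. This exhibits $\lambda$ as an absolute extreme point of $\cS$. The ``consequently'' clause then follows at once from the fact, noted just above the proposition, that every $\lambda\in\cI_K$ is a vertex of some polytope containing $K$.

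The step I expect to be trickiest is the very first: one needs enough supporting functionals so that the $\alpha_j$ actually span $\bC^d$ as a real vector space. This spanning is exactly what distinguishes a polytope vertex in $\bR^{2d}$ from a merely exposed extreme point, and it is what allows us to recover both an eigenvalue equation and its adjoint, and hence a reducing subspace rather than just an invariant one. Once that span is in hand, the rest of the argument is linear algebra together with one UCP compression.
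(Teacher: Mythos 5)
Your proof is correct, but it takes a genuinely different route from the paper's. The paper encloses $P$ in a simplex $\Delta$ having $\lambda$ as a vertex, invokes \cite[Lemma 3.9]{Passer} to see that $\lambda$ is an absolute extreme point of $\Wmin{}(\Delta)$, uses the identity $\Wmax{}(\Delta)=\Wmin{}(\Delta)$ for simplices, and then passes absolute extremality down the chain $\cS\subseteq\Wmax{}(K)\subseteq\Wmax{}(\Delta)=\Wmin{}(\Delta)$. You instead argue directly with the supporting functionals at the vertex: since the normal cone of $P$ at $\lambda$ is full-dimensional, the $\alpha_j$ span $\bC^d$ over $\bR$; the lifted pencils $\widetilde L_j$ are negative semidefinite on all of $\cS$ because every vector state of a point of $\cS$ lands in $K\subseteq P$; positivity then forces $\widetilde L_j(X^{(i)})V_i=0$ in any weakly proper combination; and the spanning property recovers both $(X^{(i)}_k-\lambda_k)V_i=0$ and its adjoint, i.e.\ a one-dimensional reducing eigenspace, which is exactly what absolute extremality of a level-one point requires. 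All the individual steps check out, including the polyhedral fact that the active normals at a vertex span the ambient space and the passage from $V_i^*NV_i=0$, $N\le 0$ to $NV_i=0$. Your version is self-contained (it avoids both the simplex reduction and the nontrivial fact that $\Wmax{}$ and $\Wmin{}$ coincide on simplices) and makes transparent why a polytope vertex --- as opposed to a mere exposed extreme point --- yields a reducing rather than merely invariant subspace; the paper's version is shorter because it reuses machinery already cited elsewhere in the text.
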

\begin{proof}
If $\lambda \in K$ is a vertex of the polytope $P$, then $\lambda$ is also a vertex of a simplex $\Delta$ which contains $P$. It follows that $\lambda$ is an absolute extreme point of $\Wmin{}(\Delta)$ (see \cite[Lemma 3.9]{Passer} and the commentary immediately thereafter). Since
\bes
\mathcal{S} \subseteq \Wmax{}(K) \subseteq \Wmax{}(\Delta) =\Wmin{}(\Delta),
\ees
it follows that $\lambda \in \mathcal{S}_1$ is an absolute extreme point of a set larger than $\cS$, so $\lambda$ is also an absolute extreme point of $\cS$.
\end{proof}

Motivated by the equivalence (\ref{eq:isolationEEP}), we define a collection of matrix extreme points which behave in a similar way.

\begin{definition}
Let $\mathcal{S}$ be a closed and bounded matrix convex set, and let $X \in \mathcal{S}$. Then we call $X$ a \textit{crucial matrix extreme point} if the collection
\bes
\mathcal{C} := \{ M \in \mathcal{S}:   M \text{ is a matrix extreme point of } \mathcal{S} \text{ and } M \text{ is not unitarily equivalent to } X\}
\ees
has the property that the closed matrix convex hull of $\mathcal{C}$ excludes $X$.
\end{definition}

Note that by Webster and Winkler's matricial Krein-Milman theorem, a crucial matrix extreme point of $\cS$ is indeed a matrix extreme point of $\cS$. We also immediately reach the following from (\ref{eq:isolationEEP}):
\bes
\cS_1 \cap \{X \in \cS: X \text{ is a crucial matrix extreme point of } \cS\} \subseteq \cI_{\cS_1}.
\ees
That is, a crucial matrix extreme point of $\cS$ which belongs to the first level $\mathcal{S}_1$ must be an isolated extreme point of $\cS_1$. For sets of the form $\Wmin{}(K)$, the converse also holds.

\begin{proposition}\label{prop:minminminmin}
Let $K \subset \bC^d$ be compact and convex. Then
\be\label{eq:ittakesallkinds}
\emph{ext}(K) = \emph{MEP}(\Wmin{}(K)) = \emph{AEP}(\Wmin{}(K))
\ee
and
\be
\cI_K = \{X \in \Wmin{}(K): X \emph{ is a crucial matrix extreme point of } \Wmin{}(K)\}.
\ee
\end{proposition}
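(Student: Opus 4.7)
The plan is to establish the first identity via the chain
$$\text{AEP}(\Wmin{}(K)) \subseteq \text{MEP}(\Wmin{}(K)) \subseteq \text{ext}(K) \subseteq \text{AEP}(\Wmin{}(K)),$$
and then to deduce the second identity from a description of the closed matrix convex hull of a set of scalar tuples. The first inclusion is immediate, since every proper matrix convex combination is weakly proper.

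For the inclusion $\text{MEP}(\Wmin{}(K)) \subseteq \text{ext}(K)$, the excerpt already records $\text{MEP}(\cS) \cap \cS_1 = \text{ext}(\cS_1)$, so it suffices to verify that $\Wmin{}(K)$ has no matrix extreme points at levels $n \geq 2$. Given $X \in \Wmin{}(K)_n$, take a normal dilation $N$ with joint spectrum $\{\mu^{(1)}, \ldots, \mu^{(m)}\} \subseteq K$. Expanding $X = V^*NV$ through the spectral projections of $N$ and then decomposing each positive matrix $W_k^*W_k$ into rank-one summands produces a proper matrix convex combination $X = \sum_{k,j} (w_{k,j})^* \mu^{(k)} w_{k,j}$ of scalar tuples $\mu^{(k)} \in K$ with nonzero row vectors $w_{k,j} \in M_{1,n}$. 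Since $n \geq 2$ and a scalar tuple cannot be unitarily equivalent to a tuple of size at least $2$, $X$ is not matrix extreme.

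The delicate step is the third inclusion $\text{ext}(K) \subseteq \text{AEP}(\Wmin{}(K))$. Fix $\lambda \in \text{ext}(K)$ and a weakly proper decomposition $\lambda = \sum V_i^* X^{(i)} V_i$ with $V_i \in M_{n_i,1}$ nonzero and $X^{(i)} \in \Wmin{}(K)_{n_i}$. Setting $u_i = V_i/\|V_i\|$ and $t_i = \|V_i\|^2$ reduces the identity to the scalar convex combination $\lambda = \sum t_i \mu_i$, where $\mu_i := (\langle X^{(i)}_j u_i, u_i \rangle)_{j=1}^d$ lies in $\cW_1(X^{(i)}) \subseteq K$, so extremality of $\lambda$ forces $\mu_i = \lambda$ for every $i$. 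To upgrade this to a direct summand statement I would dilate $X^{(i)}$ to a commuting normal tuple $N^{(i)}$ with joint spectrum in $K$. The spectral measure of $N^{(i)}$ applied to the lifted vector $u_i$ yields a probability measure $\nu_i$ on $K$ with barycenter $\lambda$, and extremality once more forces $\nu_i = \delta_\lambda$. Thus $u_i$ is a joint eigenvector of $N^{(i)}$ at $\lambda$, and by normality also a joint eigenvector of $(N^{(i)})^*$ at $\bar\lambda$. Compressing back to $X^{(i)}$, $\text{span}(u_i)$ becomes a one-dimensional reducing subspace on which $X^{(i)}$ acts as $\lambda$, so $X^{(i)}$ is unitarily equivalent to $\lambda \oplus Z_i$ for some $Z_i$.

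For the second identity, because every matrix extreme point of $\Wmin{}(K)$ lies at the first level, the collection $\mathcal{C}$ of matrix extreme points not unitarily equivalent to a fixed $\lambda \in \text{ext}(K)$ is exactly $\text{ext}(K) \setminus \{\lambda\}$. The closed matrix convex hull of any set $S \subseteq \bC^d$ of scalar tuples agrees with $\Wmin{}(\overline{\text{conv}}(S))$, which may be checked directly from the rank-one decomposition employed above together with Krein--Milman. Hence this closed matrix convex hull contains $\lambda$ if and only if $\lambda \in \overline{\text{conv}}(\text{ext}(K) \setminus \{\lambda\})$, and by (\ref{eq:isolationEEP}) this fails precisely when $\lambda \in \cI_K$. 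The main obstacle is the AEP step: promoting the single-vector numerical-range identity $\langle X^{(i)}_j u_i, u_i \rangle = \lambda_j$ into a direct summand structure for the generally non-normal tuple $X^{(i)}$ depends essentially on passing through a normal dilation, since normality is what transfers the joint eigenvector property to the adjoints and thereby produces a reducing subspace of $X^{(i)}$ itself.
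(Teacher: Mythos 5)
Your proof is correct and follows essentially the same route as the paper: the same chain of inclusions $\text{AEP}(\Wmin{}(K)) \subseteq \text{MEP}(\Wmin{}(K)) \subseteq \text{ext}(K) \subseteq \text{AEP}(\Wmin{}(K))$ driven by normal dilations, and the same reduction of the second identity to (\ref{eq:isolationEEP}) via identifying the closed matrix convex hull of a set of scalar points with $\Wmin{}$ of its closed convex hull. The only difference is that where the paper cites \cite[Lemma 3.9]{Passer} for $\text{ext}(K) \subseteq \text{AEP}(\Wmin{}(K))$ and \cite[Corollary 4.4]{DDSS} for the hull identification, you supply direct arguments; in particular your representing-measure (Bauer-type) argument producing a one-dimensional reducing subspace is a correct inline proof of the cited lemma.
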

\begin{proof}
For any $X \in \Wmin{}(K)$, $X$ admits a normal matrix dilation $N \in M_m^d$ with $\sigma(N) \subseteq K$ by (\ref{eq:Wmindefintro}) and \cite[Theorem 7.1]{DDSS}. The joint diagonalization of $N$ shows that $X$ can be written as a proper matrix convex combination of points $\lambda_i$ in $K = \mathcal{S}_1$. If $X$ is a matrix extreme point of $\Wmin{}(K)$, then all the $\lambda_i$ are unitarily equivalent to $X$, so $X$ is in level one. Since $X$ is certainly still extreme, we conclude that $\text{MEP}(\Wmin{}(K)) \subseteq \text{ext}(K)$. From \cite[Lemma 3.9]{Passer}, we have that $\text{ext}(K) \subseteq \text{AEP}(\Wmin{}(K))$, and finally the containment $\text{AEP}(\Wmin{}(K)) \subseteq \text{MEP}(\Wmin{}(K))$ is trivial.

If $X$ is an isolated extreme point of $K$, then the set of matrix extreme points which are not unitarily equivalent to $X$ is precisely $\mathcal{C} := \text{ext}(K) \setminus \{X\}$ by (\ref{eq:ittakesallkinds}). The closed matrix convex set generated by $\mathcal{C}$ is the matrix range of $N := \bigoplus\limits_{\lambda \in \mathcal{C}} \lambda$, which by \cite[Corollary 4.4]{DDSS} is $\Wmin{}(\overline{\text{conv}(\mathcal{C})})$.  Since the first level $\overline{\text{conv}(\mathcal{C})}$ excludes $X$ by (\ref{eq:isolationEEP}), we see that $X$ is a crucial matrix extreme point of $\Wmin{}(K)$.

Suppose instead that $X$ is a crucial matrix extreme point of $\Wmin{}(K)$. Since $X$ is a matrix extreme point, (\ref{eq:ittakesallkinds}) shows that $X \in \text{ext}(K)$, and the fact that $X$ is crucial implies that $X \not\in \ol{\text{conv}(\text{ext}(K) \setminus \{X\})}$. By (\ref{eq:isolationEEP}), $X$ is isolated as an extreme point of $K$.
\end{proof}

An immediate consequence of Proposition \ref{prop:minminminmin} is that a closed and bounded matrix convex set might have no crucial matrix extreme points. For example, consider $\Wmin{}(K)$ where $K$ is the unit disk. The following result also shows that in general, crucial matrix extreme points must also be absolute extreme points.

\begin{proposition}
Let $\mathcal{S}$ be a closed and bounded matrix convex set. Then every crucial matrix extreme point of $\mathcal{S}$ is an absolute extreme point.
\end{proposition}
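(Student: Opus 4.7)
The plan is to argue by contradiction. Suppose $X \in \mathcal{S}_m$ is a crucial matrix extreme point of $\mathcal{S}$ which is not an absolute extreme point. Then by Theorem \ref{thm:Krielboundedlevel}---applied to $\mathcal{S}$ after splitting coordinates into real and imaginary parts, as noted in the remark immediately following that theorem---there is a matrix extreme point $Y$ of $\mathcal{S}$ which properly dilates $X$, with
\[
Y = \begin{pmatrix} X & b \\ b^* & c \end{pmatrix}
\]
and nonzero off-diagonal block $b$. The crucial observation is then purely dimensional: $Y$ lives in a strictly higher level of $\mathcal{S}$ than $X$, so $Y$ cannot be unitarily equivalent to $X$, and therefore $Y$ belongs to the collection $\mathcal{C}$ appearing in the definition of a crucial matrix extreme point.

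Next, I would exhibit $X$ as a matrix convex combination of $Y$. Let $V$ be the isometric embedding of $\bC^m$ onto the first $m$ coordinates of the ambient space of $Y$, so that $V^*V = I_m$ and $V^*YV = X$ coordinatewise. In the terminology of Section 2 (with $k = 1$ in the defining sum), this is a legitimate matrix convex combination, so $X$ lies in the matrix convex hull of the single point $Y \in \mathcal{C}$, and hence in the closed matrix convex hull of $\mathcal{C}$. This directly contradicts the assumption that $X$ is crucial, and so $X$ must be an absolute extreme point after all.

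The only point requiring care is the appeal to Theorem \ref{thm:Krielboundedlevel} for a matrix convex set that is not assumed to be self-adjoint. Since the paper has already observed that the self-adjoint statement implies the general one---either by passing to real and imaginary parts, or via the combination of \cite[Lemma 2.3]{DavKen15} and \cite[Theorem B]{Far00}---this reduction is routine rather than a genuine obstacle. No new machinery is needed beyond Kriel's dilation lemma and the definition of matrix convex combinations.
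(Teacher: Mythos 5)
Your proof is correct and follows essentially the same route as the paper's: apply Kriel's dilation lemma to a non-absolute matrix extreme point, note the dilation lives in a higher level and hence is not unitarily equivalent to $X$, and compress back to contradict cruciality. The only implicit step (that a crucial matrix extreme point is a matrix extreme point, needed to invoke the lemma) is already justified in the paper via Webster--Winkler immediately after the definition.
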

\begin{proof}
Suppose $X$ is a matrix extreme point of $\mathcal{S}$ which is not an absolute extreme point. Theorem \ref{thm:Krielboundedlevel} shows that there is a nontrivial dilation $Y$ of $X$ which is also a matrix extreme point of $\mathcal{S}$. Moreover, since $X$ and $Y$ have distinct finite dimension, $Y$ cannot be unitarily equivalent to $X$. We conclude that since $X$ is in the matrix convex hull of $Y$, $X$ cannot be a crucial matrix extreme point of $\mathcal{S}$ by definition.
\end{proof}

In the next section, we extend the results of \cite{DDSSa} by showing how the crucial matrix extreme points of the matrix range may be used to characterize when a tuple of compact operators is nonsingular.

\section{Characterizations of Nonsingularity}

Let us recall some definitions surrounding nonsingular compact tuples. If $A \in \cK(H)^d$ is a tuple of compact operators on an infinite-dimensional space, then the operator system $S_A$ is not contained in the compacts, as it by definition includes the unit. Indeed, the unital $C^*$-algebra generated by $A$, denoted $C^*(S_A)$, admits a singular representation $\pi_0$ which annihilates all compact operators and maps the identity operator to $1 \in \bC$. It turns out that $\pi_0$ may or may not be dominated by $\partial_A$, the collection of irreducible subrepresentations of the identity representation of $C^*(S_A)$ which are also boundary representations of $S_A$ in $C^*(S_A)$.

\begin{definition}\label{def:nonsingular} \cite[Definition 6.3]{DDSSa}
A tuple $A = (A_1,...,A_d) \in \cK(H)^d$ is said to be {\em nonsingular} if either $\dim H < \infty$, or if $\dim H = \infty$ and for every $n$ and every matrix $(s_{ij}) \in M_n(S_A)$,
\be\label{eq:nonsingular}
\|\pi_0(s_{ij})\| \leq \sup \left\{\|\pi(s_{ij})\| : \pi \in \partial_A \right\}.
\ee
Otherwise $A$ is said to be {\em singular}.
\end{definition}

The notion of nonsingularity behaves very well with respect to direct summands and multiplicity.

\begin{lemma}\label{lem:mosummandslessproblems}
Let $A$ and $B$ be $d$-tuples of compact operators, and assume that $B$ is a summand of $A$ with $\cW(B) = \cW(A)$. If $B$ is nonsingular, then $A$ is nonsingular. Similarly, if $C \in \cK(H)^d$ and $\bigoplus\limits_{i=1}^N C$ is nonsingular for some $N \in \bZ^+$, then $C$ is nonsingular.
\end{lemma}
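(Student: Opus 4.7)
My plan is to exploit in both assertions a unital surjective $*$-homomorphism $\rho$ whose restriction to the operator system is a complete isometry. For the first assertion, compressing $A$ to its $B$-summand gives $\rho: C^*(S_A) \to C^*(S_B)$ sending $A_i \mapsto B_i$, and the hypothesis $\cW(A) = \cW(B)$ forces $\rho|_{S_A}$ to be completely isometric. For the second assertion, setting $D = \bigoplus_{i=1}^N C$, matrix ranges ignore multiplicity so $\cW(D) = \cW(C)$, and compressing to one copy of $C$ produces an analogous $\rho: C^*(S_D) \to C^*(S_C)$ with $\rho|_{S_D}$ completely isometric.

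For the first assertion I dispose of $\dim H_A < \infty$ trivially and assume $\dim H_A = \infty$. Given $t \in M_n(S_A)$, since $\pi_0^A$ is UCP we have $\|\pi_0^A(t)\| \leq \|t\|_{M_n(S_A)} = \|\rho^{(n)}(t)\|_{M_n(S_B)}$. The next step is to identify $\|\cdot\|_{M_n(S_B)}$ with $\sup_{\pi \in \partial_B}\|\pi^{(n)}(\cdot)\|$: since $S_B$ embeds completely isometrically into $C^*_e(S_B) \cong \sigma_B(C^*(S_B)) \oplus \bigoplus_{\pi \in \partial_B}\pi(C^*(S_B))$, the norm on $M_n(S_B)$ equals the maximum over these summands, and nonsingularity of $B$ (with $\sigma_B$ absent when $B$ is finite-dimensional) dominates the $\sigma_B$-contribution by the $\partial_B$-contributions. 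The final step is to show $\pi \circ \rho \in \partial_A$ (up to unitary equivalence) for every $\pi \in \partial_B$: it is irreducible since $\rho$ is surjective, it acts on $H_\pi \subseteq H_B \subseteq H_A$ with $H_\pi$ reducing for $C^*(S_A)$, and it is a boundary representation because $\ker \rho$ is a boundary ideal for $S_A$, so $\rho$ factors through the Shilov quotient and boundary representations pull back via the canonical identification $C^*_e(S_A) = C^*_e(S_B)$. Chaining these gives $\|\pi_0^A(t)\| \leq \sup_{\pi' \in \partial_A}\|\pi'^{(n)}(t)\|$.

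For the second assertion the roles of hypothesis and conclusion sit on opposite sides of $\rho: C^*(S_D) \to C^*(S_C)$. Given $t \in M_n(S_C)$ I would lift to the unique $s \in M_n(S_D)$ with $\rho^{(n)}(s) = t$ via the inverse of the complete isometry, note that the constant parts agree so $\pi_0^D(s) = \pi_0^C(t)$, and apply nonsingularity of $D$ to get $\|\pi_0^C(t)\| \leq \sup_{\pi \in \partial_D}\|\pi(s)\|$. The structure theorem for $C^*$-subalgebras of $\cK(H)$ then gives $D$ and $C$ the same irreducible summands up to unitary equivalence, which combined with the boundary-representation bijection yields a norm-preserving bijection $\partial_C \leftrightarrow \partial_D$ via $\pi' \mapsto \pi' \circ \rho$, so $\sup_{\pi \in \partial_D}\|\pi(s)\| = \sup_{\pi' \in \partial_C}\|\pi'(t)\|$, giving $C$ nonsingular. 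I expect the main obstacle in both parts to be the verification of the bijections $\partial_B \leftrightarrow \{\pi \circ \rho : \pi \in \partial_B\} \subseteq \partial_A$ and $\partial_C \leftrightarrow \partial_D$: one must track simultaneously that both the boundary-representation condition and the subrepresentation-of-identity condition are preserved under $\rho$.
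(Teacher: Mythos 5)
Your proposal is correct and follows essentially the same route as the paper: identify $C^*_e(S_A)\cong C^*_e(S_B)$ via the complete isometry coming from $\cW(A)=\cW(B)$, use nonsingularity of $B$ to compute the operator-system norm as a supremum over $\partial_B$, and transport $\partial_B$ into $\partial_A$ through the compression $*$-homomorphism (the paper handles your ``pull back boundary representations along a boundary-ideal quotient'' step by citing Arveson's Theorem 2.1.2); the multiplicity statement is likewise handled via the $*$-isomorphism $x\mapsto x\otimes I_N$ exactly as you do. Your write-up is somewhat more explicit about the chain of inequalities and the verification that $\pi\circ\rho\in\partial_A$, but there is no substantive difference.
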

\begin{proof}
The equality $\cW(A) = \cW(B)$ implies that the map $A_i \mapsto B_i$ extends to a completely isometric isomorphism from $S_A$ to $S_B$ \cite[Theorem 5.1]{DDSS}, so it extends to a $*$-isomorphism $C^*_e(S_A) \to C^*_e(S_B)$.
Since $B$ is nonsingular,
\[
C_e^*(S_A) \cong C_e^*(S_B) \cong \bigoplus_{\pi \in \partial_B} \pi\left(C^*(S_B)\right).
\]
Now, the compression of $A$ to $B$ extends to a $*$-homomorphism, so we can identify $\partial_B$ with a subset of $\partial_A$ (see \cite[Theorem 2.1.2]{Arv69}).
It follows that for {\em any} representation $\sigma$ of $C^*(S_A)$ and for any $(s_{ij}) \in M_n(S_A)$,
\[
\|\sigma(s_{ij})\| \leq \sup_{\pi \in \partial_B} \|\pi(s_{ij})\| \leq \sup_{\pi \in \partial_A} \|\pi(s_{ij})\| .
\]
This shows that $A$ must be nonsingular.

Next, let $C \in \cK(H)^d$ and define $D = \bigoplus\limits_{i=1}^N C$, so the map $x \mapsto x \otimes I_N$ is a $*$-isomorphism between $C^*(S_C)$ and $C^*(S_D)$.
The unitary equivalence classes of irreducible subrepresentations of the identity representations of $C$ and $D$ are the same, hence
\[
\sup_{\pi \in \partial_C} \|\pi(s_{ij})\| = \sup_{\sigma \in \partial_D} \|\sigma(s_{ij} \otimes I_N)\|
\]
for every $(s_{ij}) \in M_n(S_C)$.
If $\dim H = \infty$, then the corresponding singular representations satisfy $\pi_0(s_{ij}) = \pi_0(s_{ij} \otimes I_N)$, and we conclude that $C$ is singular if and only if $D$ is.
\end{proof}

Nonsingularity may be detected using any of the following conditions.

\begin{proposition}\cite[Proposition 6.6]{DDSSa}\label{prop:DDSSlist}
The following conditions are sufficient for a tuple $A \in \cK(H)^d$ to be nonsingular:
\begin{enumerate}
\item $\dim H < \infty$, or
\item\label{it:0summand_} $A$ contains $0$ as a direct summand, or
\item $0$ is not an isolated extreme point of $\cW_1(A)$.
\end{enumerate}
\end{proposition}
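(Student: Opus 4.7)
The plan is to verify the three sufficient conditions in order. The uniform tool is the $C^*$-envelope decomposition
\begin{equation*}
\|(s_{ij})\|_{M_n(C^*(S_A))} = \max\left(\|\sigma(s_{ij})\|,\ \sup_{\pi \in \partial_A}\|\pi(s_{ij})\|\right),
\end{equation*}
valid for $(s_{ij}) \in M_n(S_A)$ by the complete isometry of the Shilov quotient on $S_A$, combined with the universal basic estimate $\|\pi_0(s_{ij})\| \le \|(s_{ij})\|_{M_n(C^*(S_A))}$ (since $\pi_0$ is a $*$-representation, hence a UCP map on $C^*(S_A)$, whenever $\dim H = \infty$). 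Condition~(1) is immediate from Definition~\ref{def:nonsingular}.

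For condition~(2), write $A = A' \oplus 0_K$ and fix a unit vector $\xi \in K$. The isometry $V : \bC \to H$, $V(z) = z\xi$, makes $\bC\xi$ reducing for $C^*(S_A)$, so $\pi_0 = V^*(\cdot)V$ is literally a compression of the identity representation. In particular $\pi_0$ is a subrepresentation of the identity, so it lies in $\partial_A$ whenever it is a boundary representation (rendering~(\ref{eq:nonsingular}) trivial); otherwise $\sigma$ in the displayed decomposition is the nil representation, the maximum collapses to $\sup_{\pi \in \partial_A}\|\pi(s_{ij})\|$, and~(\ref{eq:nonsingular}) follows from the basic estimate.

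For condition~(3), the strategy is contrapositive: if $A$ is singular, then $0$ is an isolated extreme point of $\cW_1(A)$. Singularity forces $\sigma = \pi_0$, $\pi_0 \notin \partial_A$, and the existence of $(s_{ij}) \in M_n(S_A)$ with strict gap $\delta := \|\pi_0(s_{ij})\| - \sup_{\pi \in \partial_A}\|\pi(s_{ij})\| > 0$. Since $S_A = \spn\{I, A_1, \ldots, A_d, A_1^*, \ldots, A_d^*\}$, the UCP map $\phi_\lambda : S_A \to \bC$ with $\phi_\lambda(A_k) = \lambda_k$ is uniquely determined by $\lambda$ and depends continuously on it, so $\phi_\lambda(s_{ij}) \to \pi_0(s_{ij})$ as $\lambda \to 0$. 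For each extreme $\lambda \neq 0$ in $\cW_1(A)$, take a pure state extension of $\phi_\lambda$ to $C^*(S_A)$; by the structure theory of compact tuples this has the form $\omega_\eta \circ \rho$ for some irreducible subrepresentation $\rho$ of the identity and unit vector $\eta \in H_\rho$ (and cannot correspond to $\pi_0$ because $\lambda \neq 0$). The UCP map $\omega_\eta \otimes I_n$ yields $\|\phi_\lambda(s_{ij})\| \le \|\rho(s_{ij})\|_{M_n(\cB(H_\rho))}$, and a boundary-peaking argument using that $\pi_0$ is a boundary representation and that $\lambda$ is small dominates the right-hand side by $\sup_{\pi \in \partial_A}\|\pi(s_{ij})\|$, contradicting $\delta > 0$. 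This forbids extreme points of $\cW_1(A)$ from accumulating at $0$, isolating $0$ in $\text{ext}(\cW_1(A))$ by~(\ref{eq:isolationEEP}).

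The main obstacle will be the peaking step in case~(3): translating the qualitative fact that $\pi_0$ is a boundary representation into a quantitative bound on $\|\rho(s_{ij})\|$ when $\rho$ need not itself lie in $\partial_A$. This should combine Arveson's unique extension theorem at $\pi_0$ with the structural decomposition $C^*(A) = \bigoplus_i \cA_i$ of compact tuples, so that representations $\rho$ arising from pure extensions of nearby UCP maps are forced to contribute no more than the $\partial_A$-supremum as $\lambda \to 0$.
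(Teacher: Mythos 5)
Your arguments for conditions (1) and (2) are fine: (1) is immediate from Definition \ref{def:nonsingular}, and for (2) the compression of the identity representation to $\bC\xi$ realizes $\pi_0$ as an irreducible subrepresentation of the identity, after which your dichotomy on whether $\sigma$ equals $\pi_0$ or the nil representation settles \eqref{eq:nonsingular} exactly as you say.

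Condition (3) has a genuine gap, precisely at the step you flag as the main obstacle, and I do not think the proposed ``boundary-peaking'' repair can work. Extending the pure state $\phi_\lambda$ of $S_A$ to a pure state of $C^*(S_A)$ yields an irreducible representation $\rho$ that is a subrepresentation of the identity but need not be a \emph{boundary} representation; the only a priori bound is $\|\rho(s_{ij})\|\le\|(s_{ij})\|_{M_n(S_A)}=\max\bigl(\|\pi_0(s_{ij})\|,\ \sup_{\pi\in\partial_A}\|\pi(s_{ij})\|\bigr)$, and in the singular case this maximum is attained at $\pi_0$, so no contradiction with $\delta>0$ follows. Moreover, $\pi_0$ and $\rho$ act on orthogonal summands of $C^*(A)$, so the unique extension property of $\pi_0$ gives no leverage on $\|\rho(s_{ij})\|$. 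The missing ingredient is the one the paper uses in the discussion surrounding \eqref{eq:nonzeroMEPstuff} and in the proof of Theorem \ref{thm:char_nonsing}: because $\lambda$ is an extreme point of $\cW_1(A)$ and $\phi\mapsto\phi(A)$ is an affine bijection from the state space of $S_A$ onto $\cW_1(A)$, the map $\phi_\lambda$ is a \emph{pure} UCP map on $S_A$ itself, hence by \cite[Theorem 2.4]{DavKen15} it is a compression of the restriction of a boundary representation $\rho'$; since $\lambda\neq 0$ forces $\rho'(A)\neq 0$, we get $\rho'\in\partial_A$ and directly $\|\phi_\lambda(s_{ij})\|\le\sup_{\pi\in\partial_A}\|\pi(s_{ij})\|$, with no limiting or peaking argument. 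There is also a secondary slip at the end: by \eqref{eq:isolationEEP}, ``$0$ is an isolated extreme point'' means $0\notin\overline{\conv}\bigl(\operatorname{ext}(\cW_1(A))\setminus\{0\}\bigr)$, which is strictly stronger than ``extreme points do not accumulate at $0$'' (consider $0$ as the barycenter of three distant extreme points). You must therefore run the estimate on convex combinations $\sum_j t_j\phi_{\lambda_j}$ rather than on single extreme points; this is immediate once each $\phi_{\lambda_j}$ is dominated by $\sup_{\pi\in\partial_A}\|\pi(s_{ij})\|$, since such combinations converge to $\pi_0$ entrywise on $M_n(S_A)$.
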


We will strengthen sufficient condition (\ref{it:0summand_}), after which we will present a separate theorem which characterizes nonsingularity completely. To accomplish the first goal, we use a notion crucial to the arguments in \cite{Eve18}.

\begin{proposition}\label{prop:zerocompression}
Let $A \in \cK(H)^d$ be a tuple of compact operators. If $0$ is a compression of $\bigoplus\limits_{1=1}^N A$ for some $N \in \bZ^+$, then $A$ is nonsingular.
\end{proposition}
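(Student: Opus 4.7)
The plan is a short dichotomy argument centered on a single UCP map. Given an isometry $V: \mathbb{C}^k \to H^N$ with $V^* A_i^{\oplus N} V = 0$ (provided by the hypothesis), I would define $\phi: C^*(S_A) \to M_k$ by $\phi(T) := V^* T^{\oplus N} V$, where $T^{\oplus N}$ denotes the image of $T$ under the $N$-fold identity representation of $C^*(S_A)$. The identity $V^* A_i^{\oplus N} V = 0$ forces $\phi(s) = \pi_0(s) I_k$ for all $s \in S_A$, so the crucial question is whether $\phi$ and $\pi_0 \otimes I_k$ also agree on the rest of $C^*(S_A)$.

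First I would handle the case where the agreement does extend to all of $C^*(S_A)$. Writing $V$ in block form as $V = (V^{(1)}, \dots, V^{(N)})^\top$ with $V^{(l)}: \mathbb{C}^k \to H$, the equation $\phi(A_i^* A_i) = 0$ becomes $(A_i^{\oplus N} V)^*(A_i^{\oplus N} V) = 0$, which forces $A_i V^{(l)} = 0$ for every $i$ and $l$; the analogous computation with $A_i A_i^*$ gives $A_i^* V^{(l)} = 0$. Thus each $V^{(l)}$ maps into $K := \bigcap_i (\ker A_i \cap \ker A_i^*)$, a closed reducing subspace of $H$ on which every $A_i$ vanishes, and $K \neq \{0\}$ since $V$ is nonzero. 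Therefore $A$ has $0$ as a direct summand, and Proposition~\ref{prop:DDSSlist}(\ref{it:0summand_}) delivers nonsingularity.

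Next I would handle the opposite case, where $\phi(T_0) \neq \pi_0(T_0) I_k$ for some $T_0 \in C^*(S_A)$. One can choose a unit vector $u \in \mathbb{C}^k$ so that $u^*(\phi(T_0) - \pi_0(T_0) I_k) u \neq 0$; then the state $\Psi_u(T) := u^* \phi(T) u$ extends $\pi_0|_{S_A}$ but differs from $\pi_0$ at $T_0$, so $\pi_0|_{S_A}$ admits more than one UCP extension to $C^*(S_A)$, and $\pi_0$ cannot be a boundary representation of $S_A$ in $C^*(S_A)$. Since every irreducible representation of $C^*(S_A)$ is equivalent either to a subrepresentation of the identity or to $\pi_0$, this forces every boundary representation to belong to $\partial_A$. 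The complete isometry $S_A \hookrightarrow C^*_e(S_A)$ and the realization of $C^*_e(S_A)$ as the image of $C^*(S_A)$ under the sum of boundary representations then yield $\|(s_{ij})\|_{M_n(S_A)} = \sup_{\pi \in \partial_A} \|\pi^{(n)}((s_{ij}))\|$ for all $(s_{ij}) \in M_n(S_A)$, and the complete contractivity of $\pi_0$ produces \eqref{eq:nonsingular}.

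The one step I expect to require care is the transition from the $M_k$-level failure $\phi \neq \pi_0 \otimes I_k$ to the scalar-level failure of unique extension needed to rule out $\pi_0$ being a boundary representation; this is exactly what the vector-state argument accomplishes. Everything else is bookkeeping with the structure of irreducible representations of $C^*(S_A)$ and with the definition of $C^*_e(S_A)$, and compactness of $A$ enters only through Proposition~\ref{prop:DDSSlist}(\ref{it:0summand_}) in the first case.
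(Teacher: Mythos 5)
Your argument is correct, but it takes a genuinely different route from the paper's. The paper first reduces to $\widetilde{A}=\bigoplus_{i=1}^N A$ via Lemma \ref{lem:mosummandslessproblems} and then, when $0$ is not a direct summand, extracts a two-dimensional compression of $\widetilde{A}$ that nontrivially dilates $0$; by \cite[Lemma 3.5]{EHKM16} this shows $0$ is not an absolute extreme point of $\cW(\widetilde{A})$, hence (by Proposition \ref{prop:isolatedEEPAEP}) not an isolated extreme point of $\cW_1(\widetilde{A})$, and nonsingularity follows from Proposition \ref{prop:DDSSlist}. You instead attack Definition \ref{def:nonsingular} head-on: the compression to $0$ packages itself as a UCP extension $\phi$ of $\pi_0|_{S_A}$, and your dichotomy is whether $\phi$ agrees with $\pi_0\otimes I_k$ on all of $C^*(S_A)$. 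If it does, the identities $\phi(A_i^*A_i)=\phi(A_iA_i^*)=0$ force a nonzero reducing subspace on which $A$ vanishes, so $0$ is a direct summand and Proposition \ref{prop:DDSSlist}(\ref{it:0summand_}) applies --- the same first branch as the paper, reached by a computation that lands on $A$ itself rather than on $\widetilde{A}$. If it does not, your vector-state argument shows $\pi_0$ fails the unique extension property, so every boundary representation lies in $\partial_A$, and the realization of $C^*_e(S_A)$ as the image of $C^*(S_A)$ under the sum of boundary representations gives \eqref{eq:nonsingular} directly. The paper's route is shorter given the extreme-point machinery it has already developed; yours bypasses that machinery, needs only item (\ref{it:0summand_}) of Proposition \ref{prop:DDSSlist} together with the representation theory of compacts and the Davidson--Kennedy description of the C*-envelope, and yields the extra information that for such $A$ either $0$ is a direct summand or $\pi_0$ is not even a boundary representation. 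Two cosmetic points: you should note that the case $\dim H<\infty$ is trivial by definition (your use of $\pi_0$ presumes $\dim H=\infty$), and the existence of a unit vector $u$ with $u^*\bigl(\phi(T_0)-\pi_0(T_0)I_k\bigr)u\neq 0$ rests on the fact that a matrix with vanishing numerical range is zero (polarization over $\bC$), which is fine but worth saying.
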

\begin{proof}
By Lemma \ref{lem:mosummandslessproblems}, we need only prove nonsingularity of $\widetilde{A} := \bigoplus\limits_{i=1}^N A$.
Let $0$ be a compression of $\widetilde{A}$.
If $0$ is a direct summand of $\widetilde{A}$, then $\widetilde{A}$ is nonsingular by Proposition \ref{prop:DDSSlist}, so we may assume otherwise.
In particular, we have that $\widetilde{A}$ is a nontrivial dilation of $0$.
It follows that there is some two-dimensional compression $B$ of $\widetilde{A}$ such that $B$ is a nontrivial dilation of $0$.
From \cite[Lemma 3.5]{EHKM16}, we conclude that $0$ is not an absolute extreme point of $\cW(\widetilde{A})$.
Finally, Proposition \ref{prop:isolatedEEPAEP} shows that $0$ is not an isolated extreme point of $\cW_1(\widetilde{A})$, and nonsingularity of $\widetilde{A}$ follows from Proposition \ref{prop:DDSSlist}.
\end{proof}
\begin{remark} Said differently, any tuple $A \in \cK(H)^d$ which satisfies the condition \lq\lq$0$ is in the finite interior of the noncommutative convex hull\rq\rq\hspace{0pt} from \cite[\S 1.2]{Eve18} is automatically nonsingular.\end{remark}

Next, we show that the conditions of Proposition \ref{prop:DDSSlist} may be adapted to form a characterization of nonsingularity, primarily by modifying the proof to consider extreme points in various levels. We will need the following observation: given a compact tuple $A \in \cK(H)^d$, the set
\be\label{eq:nonzeroMEPstuff}
\mathcal{C} := \text{MEP}(\cW(A)) \setminus \{0\}
\ee
has the property that any $X \in \mathcal{C}$ is the compression of a summand of $A$. Indeed, if $X \in \mathcal{C}$ is of size $n \times n$, then \cite[Theorem B]{Far00} shows that there is a {\em pure} UCP map $\phi \in \UCP(S_A, M_n)$ with $\phi(A) = X$. 
By \cite[Theorem 2.4]{DavKen15}, $\phi$ is the compression of the restriction of a boundary representation $\rho$.
Since $X \not= 0$, we must have $\rho(A) \not= 0$, and hence $\rho$ is also (up to unitary equivalence) a subrepresentation of the identity representation -- that is, $\rho \in \partial_A$. In other words, $\rho(A)$ is an irreducible summand of $A$, and $X$ is a compression of this summand.
Further, if $X \in \mathcal{C}$ is actually an absolute extreme point, then we must have $X = \rho(A)$ (see also \cite[Corollary 6.27]{Kriel}).

On the other hand, if $A$ is infinite-dimensional, then the point $0 \in \cW(A)$ corresponds to the UCP map sending $A_i \mapsto 0$, which is the restriction of the singular representation $\pi_0$ to $S_A$. Even if $\pi_0$ is a boundary representation, it might not be a subrepresentation of the identity. Regardless, we may guarantee that $A$ is nonsingular if sufficiently many nonzero matrix extreme points exist.

\begin{theorem}\label{thm:char_nonsing}
Let $A \in \cK(H)^d$ be a tuple of compact operators. Then $A$ is nonsingular if and only if one of the following conditions holds.
\begin{enumerate}
\item\label{it:finsum} There is a finite-dimensional summand $B$ of $A$ with $\mathcal{W}(A) = \mathcal{W}(B)$, or
\item\label{it:noncruc} $0$ is not a crucial matrix extreme point of $\mathcal{W}(A)$.
\end{enumerate}
\end{theorem}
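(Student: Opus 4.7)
The proof splits into two directions. For $(2) \Rightarrow$ nonsingularity I would first observe that $0$ fails to be a crucial matrix extreme point of $\cW(A)$ precisely when $0 \in \overline{\text{mx-conv}}(\mathcal{C}')$, where $\mathcal{C}' := \mathrm{MEP}(\cW(A)) \setminus \{0\}$: if $0$ is a matrix extreme point this is the definition, and if $0$ is not a matrix extreme point then Webster--Winkler already gives $\cW(A) = \overline{\text{mx-conv}}(\mathcal{C}')$, so $0 \in \cW(A)$ automatically lands in the hull. Setting $\widetilde{A} := \bigoplus_{\pi \in \partial_A} \pi(A)$, the discussion around~\eqref{eq:nonzeroMEPstuff} shows $\mathcal{C}' \subseteq \cW(\widetilde{A})$, hence $\overline{\text{mx-conv}}(\mathcal{C}') \subseteq \cW(\widetilde{A})$; combined with (2), every matrix extreme point of $\cW(A)$ lies in $\cW(\widetilde{A})$, and Webster--Winkler forces $\cW(A) = \cW(\widetilde{A})$. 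The complete isometry $S_A \to S_{\widetilde{A}}$ from \cite[Theorem 5.1]{DDSS} then gives $\|s\|_{M_n(S_A)} = \sup_{\pi \in \partial_A} \|\pi(s)\|$, and because $\pi_0$ is a $\ast$-homomorphism on $C^*(S_A)$, the required inequality $\|\pi_0(s)\| \leq \sup_\pi \|\pi(s)\|$ follows, establishing nonsingularity. Condition $(1) \Rightarrow$ nonsingularity is immediate from Lemma~\ref{lem:mosummandslessproblems}, as any finite-dimensional $B$ is trivially nonsingular.

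For the forward direction, assume $A$ is nonsingular with $\dim H = \infty$ and that (1) fails. Nonsingularity produces $\cW(A) = \cW(\widetilde{A})$, where $\widetilde{A}$ is (up to unitary equivalence) a direct summand of $A$; failure of (1) therefore forces $\widetilde{A}$ to be infinite-dimensional. I would split into two cases. If some $\pi \in \partial_A$ has $\dim H_\pi = \infty$, then $\pi(A)$ is an irreducible compact tuple, and the compact positive operator $\sum_i \pi(A_i)^* \pi(A_i)$ has infinite-dimensional spectral subspace for $[0, \epsilon^2]$. Picking an isometry $V_\epsilon : \bC^n \to H_\pi$ with image in that subspace gives $\|V_\epsilon^* \pi(A) V_\epsilon\| \leq \epsilon$; because $\pi$ is an irreducible boundary representation, this compression is a matrix extreme point of $\cW(A)$ (see the obstacle below), and a mild perturbation ensures it is nonzero and hence in $\mathcal{C}'$. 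Letting $\epsilon \to 0$ places $0$ in the level-$n$ closure of $\mathcal{C}'$. Otherwise every $\pi \in \partial_A$ has finite-dimensional $H_\pi$ but $\partial_A$ is infinite, and compactness of $A$ forces $\|\pi(A)\| \to 0$ along $\partial_A$. Each such $\pi(A)$ (excluding the case $\pi = \pi_0$) is itself a matrix extreme point in $\mathcal{C}'$, and for any $n$ and $\epsilon > 0$ one selects finitely many $\pi_j$ with $\|\pi_j(A)\| < \epsilon$ and $\sum_j \dim H_{\pi_j} \geq n$. Viewing an isometry $W : \bC^n \to \bigoplus_j H_{\pi_j}$ as inducing $V_j = P_{H_{\pi_j}} W$ with $\sum_j V_j^* V_j = I_n$, the matrix convex combination $\sum_j V_j^* \pi_j(A) V_j$ is an isometric compression of $\bigoplus_j \pi_j(A)$, so $\|\sum_j V_j^* \pi_j(A) V_j\| \leq \max_j \|\pi_j(A)\| < \epsilon$; this again places $0 \in \overline{\text{mx-conv}}(\mathcal{C}')$ at level $n$. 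Either way, (2) holds.

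The principal obstacle is justifying that an isometric compression $V^* \pi(\cdot) V|_{S_A}$ of an irreducible boundary representation $\pi \in \partial_A$ gives a matrix extreme point of $\cW(A)$. This requires combining the Farenick characterization of matrix extreme points as pure UCP map images~\cite[Theorem B]{Far00} with the Davidson--Kennedy dilation theorem~\cite[Theorem 2.4]{DavKen15}: irreducibility of $\pi$ makes $V(\bC^n)$ cyclic, so $\pi$ is the minimal Stinespring dilation of $V^* \pi(\cdot) V|_{S_A}$, and the boundary-representation property supplies the uniqueness of UCP extension from $S_A$ to $C^*(S_A)$ needed to lift purity from $C^*(S_A)$ back to $S_A$. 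A secondary subtlety is ensuring that the constructed compressions are genuinely nonzero rather than accidentally in $\{0\}$; this is handled by a small perturbation of the relevant isometry, justified since the locus of isometries producing a zero compression is nowhere dense in the feasible set whenever $\pi(A) \neq 0$.
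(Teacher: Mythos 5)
Your backward direction (each of (\ref{it:finsum}), (\ref{it:noncruc}) implies nonsingularity) matches the paper's argument and is correct, and your second case in the forward direction (all boundary representations finite-dimensional, infinitely many of them) is sound: restrictions of finite-dimensional boundary representations to $S_A$ really are pure, precisely because the unique-extension property of $\pi$ forces any UCP extension of $\pi|_{S_A}$ back to $\pi$, so the $\pi_j(A)$ are genuine (absolute) extreme points and your isometric-compression estimate places $0$ in their closed matrix convex hull.

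The gap is in your Case A. The claim that an isometric compression $V_\epsilon^*\pi(\cdot)V_\epsilon|_{S_A}$ of an infinite-dimensional irreducible boundary representation gives a matrix extreme point of $\cW(A)$ is false, and no perturbation fixes it. Matrix extreme points are necessarily irreducible tuples, but a compression of an irreducible tuple to a small subspace can perfectly well be reducible: compress to a two-dimensional subspace on which $\pi(A)$ acts as $\operatorname{diag}(\lambda,\mu)$ with $\lambda\neq\mu$ in $\cW_1(A)$; this is a proper matrix convex combination of two inequivalent level-one points and hence never matrix extreme. Your proposed repair does not close this: $V^*\pi(\cdot)V$ is indeed pure on the C*-algebra $C^*(S_A)$ (its minimal Stinespring dilation is the irreducible $\pi$), but purity does not descend to $S_A$. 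The boundary property constrains UCP extensions of $\pi|_{S_A}$ itself, not of its compressions; when you write $\phi=\psi+(\phi-\psi)$ on $S_A$ and extend both pieces to $C^*(S_A)$, nothing forces the sum of the extensions to equal $V^*\pi(\cdot)V$, and the reducible example above shows the desired conclusion is simply not true. Note also that knowing only $V_\epsilon^*\pi(A)V_\epsilon\in\cW(A)$ does not put it in the closed matrix convex hull of the \emph{nonzero} matrix extreme points, which is what crucialness requires, so you cannot fall back on Webster--Winkler here. The paper avoids this entirely: it applies Kriel's dilation theorem (Theorem \ref{thm:Krielboundedlevel}) to produce an honest chain $Y^{(1)}\prec Y^{(2)}\prec\cdots$ of nonzero \emph{matrix extreme} points; if the chain is infinite, these are compressions of summands of the compact tuple $A$ of growing size, so diagonal entries $e_i^*Ae_i=e_i^*Y^{(i)}e_i$ along an orthonormal sequence tend to $0$, giving condition (\ref{it:noncruc}); if every chain terminates in an absolute extreme point, one argues as in your Case B when there are infinitely many, and uses minimality together with Theorem \ref{thm:quotingminsingunique} to get condition (\ref{it:finsum}) when there are finitely many. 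You would need to replace your Case A by an argument of this kind.
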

\begin{proof}
If $B$ is a finite-dimensional summand of $A$ (perhaps equal to $A$), then certainly $B$ is nonsingular. From Lemma \ref{lem:mosummandslessproblems}, it follows that if $\cW(B) = \cW(A)$, then $A$ is also nonsingular.

Next, suppose that $A$ is infinite-dimensional and $0$ is not a crucial matrix extreme point of $\cW(A)$, so the collection $\mathcal{C}$ of (\ref{eq:nonzeroMEPstuff}) has the property that $0$ can be approximated by matrix convex combinations of points in $\mathcal{C}$. Since any point $X \in \mathcal{C}$ dilates to $\rho(A)$ for some $\rho \in \partial_A$, it follows that $\pi_0$ may be dominated on $M_n(S_A)$ by the collection $\partial_A$.
That is, the inequality \eqref{eq:nonsingular} in Definition \ref{def:nonsingular} holds, and $A$ is nonsingular.

To prove the converse, suppose that $A$ is nonsingular, and consider nonzero matrix extreme points $Y \in \mathcal{C}$. Following the logic in the proof of \cite[Corollary 6.12]{Kriel}, use Theorem \ref{thm:Krielboundedlevel} to produce successive nontrivial dilations $Y^{(1)} \prec Y^{(2)} \prec \ldots $ of nonzero matrix extreme points. Such a sequence will terminate if and only if some $Y^{(i)}$ is an absolute extreme point of $\cW(A)$, so we consider cases.

\vspace{.1 in}

\noindent \textbf{Case I.} Suppose that there is an infinite sequence $Y^{(1)} \prec Y^{(2)} \prec \ldots $ of nonzero matrix extreme points. 
We may then form an orthonormal sequence $\{e_i\}_{i=1}^\infty$ in $H$ such that $e_i^* A e_i = e_i^* Y^{(i)} e_i$ for each $i$. 
Since $A$ is compact, it follows that $0$ is in the closed matrix convex hull of $\mathcal{C}$. 
That is, $0$ is not a crucial matrix extreme point of $\cW(A)$, and condition (\ref{it:noncruc}) holds.

\vspace{.1 in}

\noindent \textbf{Case II.} Suppose that any nonzero matrix extreme point of $\cW(A)$ may be dilated to an absolute extreme point.

If there are finitely many (non unitarily-equivalent) nonzero absolute extreme points $X^{(1)}, \ldots, X^{(N)}$ of $\cW(A)$, then one of the finite-dimensional tuples $0 \oplus \bigoplus\limits_{i=1}^N X^{(i)}$ or $\bigoplus\limits_{i=1}^N X^{(i)}$ has the same matrix range as $A$. It follows that there is a finite-dimensional (hence nonsingular) minimal tuple $Y$ with $\cW(Y) = \cW(A)$. On the other hand, \cite[Corollary 6.8]{DDSSa} shows there is a summand $B$ of $A$ which is minimal and has $\cW(B) = \cW(A)$. Further, the proof of the result produces a choice of $B$ which is itself nonsingular. From Theorem \ref{thm:quotingminsingunique}, $B$ is unitarily equivalent to $Y$, and condition (\ref{it:finsum}) holds.

If there are infinitely many (non unitarily-equivalent) nonzero absolute extreme points $X^{(1)}, X^{(2)}, \ldots$ of $\cW(A)$, then since each $X^{(i)}$ is unitarily equivalent to a distinct summand of the compact tuple $A$, it follows that $0$ is in the closed matrix hull of the $X^{(i)}$. As such, $0$ is not a crucial matrix extreme point of $\cW(A)$, and condition (\ref{it:noncruc}) holds.
\end{proof}

Of course, since nonsingular compact tuples have now been characterized, we immediately find that singular compact tuples are characterized using the negation. From revisiting the proof, one can also see that \textit{minimal} singular compact tuples must be essentially of the form outlined in \cite[Example 3.14 and Corollary 3.15]{Passer}.

\begin{theorem}\label{thm:minsingchar}
Let $A \in \cK(H)^d$ be a {\red \textbf{singular}} tuple of compact operators. Then there is an integer $N \geq 1$ and a decomposition
\bes
A \cong \bigoplus_{i=1}^N X^{(i)} \oplus Y,
\ees
where $X^{(1)}, \ldots, X^{(N)}$ are (up to unitary equivalence) all the nonzero absolute extreme points of $\mathcal{W}(A)$, $Y$ is an infinite-dimensional compact tuple, and $\cW(A) = \cW\left(\bigoplus\limits_{i=1}^N X_i \oplus 0 \right)$. Moreover, if $A$ is minimal, then $Y$ is irreducible.
\end{theorem}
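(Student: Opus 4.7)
The plan is to exploit Theorem \ref{thm:char_nonsing} in its contrapositive form. Since $A$ is singular, $A$ has no finite-dimensional summand with matrix range $\cW(A)$ and, more importantly, $0$ is a crucial matrix extreme point of $\cW(A)$. Observe also that $A \neq 0$, for otherwise $0$ would be a direct summand of $A$, forcing $A$ to be nonsingular by Proposition \ref{prop:DDSSlist}.

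The first step is to reuse the dilation-chain arguments from Cases I and II of the proof of Theorem \ref{thm:char_nonsing} to control the nonzero matrix extreme points of $\cW(A)$. Because $0$ is crucial, the Case I construction rules out the existence of an infinite sequence $Y^{(1)} \prec Y^{(2)} \prec \cdots$ of nonzero matrix extreme points of $\cW(A)$; otherwise compactness of $A$ would exhibit $0$ as a limit of compressions of the $Y^{(i)}$'s, contradicting crucialness. Iterating Theorem \ref{thm:Krielboundedlevel} therefore shows that every nonzero matrix extreme point dilates in finitely many steps to a nonzero absolute extreme point. Since $A \ne 0$, the matricial Krein-Milman theorem guarantees such a nonzero matrix extreme point exists, giving $N \geq 1$. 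The analogue of the second subcase of Case II shows $N < \infty$: if infinitely many pairwise inequivalent nonzero absolute extreme points $X^{(1)}, X^{(2)}, \ldots$ existed, each would be unitarily equivalent to a distinct irreducible summand of the compact tuple $A$, and a standard orthonormal sequence argument would once again place $0$ in the closed matrix convex hull of the $X^{(i)}$'s.

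Next I would assemble the decomposition. By the discussion preceding Theorem \ref{thm:char_nonsing}, each nonzero absolute extreme point satisfies $X^{(i)} = \rho_i(A)$ for some $\rho_i \in \partial_A$, so each $X^{(i)}$ is (unitarily equivalent to) an irreducible summand of $A$. Pulling off one copy for each $i$ and letting $Y$ denote the residual yields $A \cong \bigoplus_{i=1}^N X^{(i)} \oplus Y$, with $Y$ compact and, since $\dim H = \infty$ while $\bigoplus X^{(i)}$ is finite-dimensional, infinite-dimensional. For the matrix range equality, the containment $\cW(\bigoplus X^{(i)} \oplus 0) \subseteq \cW(A)$ holds because each $X^{(i)}$ is a summand of $A$ and $0 \in \cW_1(A)$ via the singular representation. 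The reverse containment follows from the matricial Krein-Milman theorem: every matrix extreme point of $\cW(A)$ is either $0$ (trivially in the right-hand side) or dilates to some $X^{(i)}$ and is therefore a compression of $X^{(i)}$.

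For the moreover clause, suppose $A$ is minimal yet $Y = Y_1 \oplus Y_2$ with both pieces nonzero. Since $Y$ is infinite-dimensional, at least one summand, say $Y_1$, is infinite-dimensional. Then $A' := \bigoplus_{i=1}^N X^{(i)} \oplus Y_1$ is a proper reducing compression of $A$ which remains infinite-dimensional and compact, so $0 \in \cW_1(A')$. This forces $\cW(A) = \cW(\bigoplus X^{(i)} \oplus 0) \subseteq \cW(A') \subseteq \cW(A)$, contradicting minimality. The main obstacle is verifying that the dilation-chain arguments from the proof of Theorem \ref{thm:char_nonsing} transfer intact to the singular setting; once that is in place, the remainder is bookkeeping with the structure theory of compact $C^*$-algebras and the singular representation.
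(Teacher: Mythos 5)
Your proposal is correct and follows essentially the same route as the paper: invoke Theorem \ref{thm:char_nonsing} to get that $0$ is a crucial matrix extreme point, rerun the Case I/Case II dilation-chain arguments to show every nonzero matrix extreme point dilates to one of finitely many nonzero absolute extreme points $X^{(1)},\dots,X^{(N)}$ with $N\geq 1$, peel these off as irreducible summands, and derive irreducibility of $Y$ under minimality from the fact that an infinite-dimensional compact summand already detects $0$. The only cosmetic difference is that the paper also records explicitly that $0$ is itself an absolute extreme point (via Proposition \ref{prop:isolatedEEPAEP}), which is not needed for the stated conclusion.
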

\begin{proof}
Suppose $A$ is singular, so certainly $A \not= 0$ and $\cW(A)$ has at least one nonzero matrix extreme point. From Theorem \ref{thm:char_nonsing}, we have that $0$ is a crucial matrix extreme point of $\cW(A)$. In particular, $0$ is an isolated extreme point of $\cW_1(A)$ and hence an absolute extreme point of $\cW(A)$ by Proposition \ref{prop:isolatedEEPAEP}. 

Using the same arguments as in the proof of Theorem \ref{thm:char_nonsing}, we conclude that since $0$ is a crucial matrix extreme point, it holds that any matrix extreme point $X \not= 0$ is a compression of an absolute extreme point, and there can be only finitely many nonzero absolute extreme points $X^{(1)}, \ldots, X^{(N)}$. In particular, we note that $N \not= 0$: $\cW(A)$ has at least one nonzero matrix extreme point, which dilates to an absolute extreme point. 

Since each $X^{(i)}$ must be unitarily equivalent to a summand of $A$, we may write 
\bes
A \cong \bigoplus_{i=1}^N X^{(i)} \oplus Y
\ees
for some compact tuple $Y$. However, $A$ is necessarily infinite-dimensional, so we must have that $Y$ is infinite-dimensional. It also holds that $\cW\left(\bigoplus\limits_{i=1}^N X^{(i)} \oplus 0\right) = \cW(A)$, as the left hand side includes every matrix extreme point of $\cW(A)$. Finally, if $A$ is minimal, then $Y$ must be irreducible, as otherwise $Y$ has an infinite-dimensional summand which detects $0$.
\end{proof}

In the next section, we examine the assumption that a compact tuple is fully compressed, aiming to prove a uniqueness theorem for the matrix range and relate the assumption to minimality and nonsingularity.

\section{Compressions}\label{sec:final}

Recall that a compact tuple $A \in \cK(H)^d$ is called {\em fully compressed} if no compression of $A$ to a proper closed subspace has the same matrix range as $A$. A tuple which is fully compressed is automatically minimal, but the converse need not hold, as most proper subspaces of $\cH$ are not reducing subspaces of $A$. In what follows, we consider the following questions.

\begin{itemize}
\item If $A \in \cK(H)^d$ is fully compressed, does $\cW(A)$ uniquely determine $A$?
\item How does the assumption that $A$ is fully compressed relate to previous conditions, like minimality and nonsingularity?
\end{itemize}

Nonsingularity (Definition \ref{def:nonsingular_intro}) is an assumption added to the correction \cite{DDSSa} of \cite{DDSS}, after counterexamples were found to the theorems therein. Particularly, a compact tuple which is minimal \textit{and nonsingular} is uniquely determined by its matrix range. This assumption is useful in that it directly solves the problem at hand, but it is somewhat ad-hoc. 
Therefore, our approach in this section is two-pronged.

First, we wish to show that \lq\lq fully compressed\rq\rq\hspace{0pt} is a suitable replacement condition for \lq\lq minimal and nonsingular\rq\rq\hspace{0pt}, in that a uniqueness theorem follows by appropriately adapting the techniques of \cite{DDSS} in a natural way. 
That is, we may prove a uniqueness theorem for compact tuples by examining arbitrary compressions \textit{instead of} nonsingular tuples. We choose this approach to begin with, primarily because nonsingularity is defined only in reference to compact tuples, whereas any tuple of operators may be fully compressed. Thus, this approach is open to potential generalization.

Second, we see how the study of fully compressed tuples can help us better understand the condition of nonsingularity. For example, \cite[Proposition 6.7]{DDSSa} claims that a compact tuple $A$ which is minimal and nonsingular must also have the following properties:
\begin{itemize}
\item $A$ is multiplicity-free
\item The Shilov ideal of $S_A$ in $C^*(S_A)$ is trivial.
\end{itemize}
However, only a partial converse is given. We will complete the converse in a round-robin proof, ultimately finding that for compact tuples, \lq\lq fully compressed\rq\rq\hspace{0pt} means the same thing as \lq\lq minimal and nonsingular\rq\rq\hspace{0pt}. Therefore, examining arbitrary compressions \textit{in addition to} nonsingularity leads to a somewhat better understanding of both conditions in the compact setting.

We begin with a lemma considering finite-dimensional tuples.

\begin{lemma}\label{lem:findimminfc}
If $A \in \cK(H)^d$ is minimal and $0$ is a summand of $A$, then $H$ is finite-dimensional. Moreover, if $B$ is a $d$-tuple of operators on a finite-dimensional space, then $B$ is minimal if and only if it is fully compressed.
\end{lemma}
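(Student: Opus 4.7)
The plan is to prove both parts by identifying redundant summands through matrix range equalities, relying on the fact that the matrix range is a complete invariant of the operator system (\cite[Theorem 5.1]{DDSSa}).

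For the first statement, the strategy is to contradict minimality directly. Suppose $A \in \cK(H)^d$ has a zero summand on a nonzero reducing subspace $K$, and suppose for contradiction that $H$ is infinite-dimensional. I would first choose a nonzero $K_0 \subseteq K$ whose orthogonal complement in $H$ is still infinite-dimensional: if $K^\perp$ is already infinite-dimensional take $K_0 = K$, otherwise $K$ itself is infinite-dimensional and any one-dimensional $K_0 \subseteq K$ works. Writing $A = 0_{K_0} \oplus A_0$ with $A_0$ acting on $K_0^\perp$, I would then show that the restriction map $\iota \colon S_A \to S_{A_0}$, $s \mapsto s|_{K_0^\perp}$, is a completely isometric unital $*$-isomorphism of operator systems. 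Since the matrix range determines the operator system up to complete isometric isomorphism, this yields $\cW(A) = \cW(A_0)$, contradicting minimality via the proper reducing subspace $K_0^\perp$. The complete isometry reduces to an essential norm computation: every element of $M_n(S_A)$ has block form $(\lambda_{ij} I_H) + (T_{ij})$ with $(T_{ij})$ supported on $K_0^\perp$ and each $T_{ij}$ compact, so the restriction to $K_0^n$ equals the scalar matrix $(\lambda_{ij}) \otimes I_{K_0}$ of norm $\|(\lambda_{ij})\|_{M_n}$, while the restriction to $(K_0^\perp)^n$ dominates the essential norm $\|(\lambda_{ij})\|_{M_n}$, using that $K_0^\perp$ is infinite-dimensional so that $(\lambda_{ij})$ embeds isometrically into the Calkin algebra of $(K_0^\perp)^n$. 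Hence the two restriction norms match and $\iota$ is completely isometric.

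For the second statement, the implication from fully compressed to minimal is immediate from the definitions. For the converse, assume $B$ is a minimal tuple on a finite-dimensional $H$ but not fully compressed. Then there exists a proper subspace $G \subsetneq H$ with $\cW(P_G B|_G) = \cW(B)$. The plan is to reduce $C := P_G B|_G$ to a minimal tuple $C'$ by iteratively stripping off reducing subspaces of $C$ whose removal does not alter its matrix range; the process terminates because $C$ is finite-dimensional, and $C'$ still acts on a subspace of $G$, so $\dim(C') \leq \dim(G) < \dim(H)$. Since $B$ and $C'$ are both minimal finite-dimensional tuples with equal matrix range, the finite-dimensional uniqueness result noted in the introduction (a consequence of \cite{HKM13} via the matricial polar duality of \cite{EW97}) forces $B$ to be unitarily equivalent to $C'$, contradicting the strict dimension inequality.

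The main obstacle is the complete isometry verification in the first part. Its content is a short Calkin algebra manipulation, but the infinite-dimensionality hypothesis is genuinely essential and explains why the statement is restricted to that case: in finite dimensions there is no essential norm to absorb scalar identity pieces, the singular representation does not exist, and a zero summand need not be redundant for the matrix range.
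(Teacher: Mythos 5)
Your proof is correct and follows essentially the same route as the paper: the first part comes down to the fact that an infinite-dimensional compact tuple absorbs a zero summand (the paper phrases this as $0\in\cW(B)$ for the nonzero part $B$; your essential-norm/Calkin computation is exactly the justification of that fact), and the second part compresses down to a minimal finite-dimensional tuple with the same matrix range and invokes the uniqueness theorem for minimal matrix tuples, which the paper itself notes can be sourced from \cite{HKM13} rather than from the nonsingularity machinery. Your explicit choice of $K_0$ with $K_0^\perp$ infinite-dimensional is in fact slightly more careful than the paper's one-line argument, which tacitly assumes the nonzero summand $B$ is infinite-dimensional.
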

\begin{proof}
Suppose $A \in \cK(H)^d$ is such that $A = B \oplus 0$ and $A$ is infinite-dimensional. Then $B$ is infinite-dimensional, $0 \in \cW(B)$, and $\cW(A) = \cW(B)$. We conclude that $A$ is not minimal.

For the second statement, we need only prove that minimal tuples of finite-dimensional operators are fully compressed. 
If $B$ is minimal, then because $B$ acts on a space of finite dimension, there is a compression $C$ of $B$ such that $\cW(B) = \cW(C)$ and $C$ is fully compressed. 
Both $B$ and $C$ are minimal, so by Theorem \ref{thm:quotingminsingunique}, they are unitarily equivalent.
It follows that $C$ cannot be a proper compression, and hence $B$ is fully compressed.
\end{proof}

Note that while Theorem \ref{thm:quotingminsingunique} (i.e., \cite[Theorem 6.9]{DDSSa}) concerns nonsingular tuples, we have only used it for tuples of finite-dimensional operators in the above proof. The uniqueness result we need may therefore ultimately be deduced from results in free spectrahedra, as studied in \cite{HKM13} and \cite{Zalar}. Namely, so long as one applies a shift to ensure $0$ is present in the matrix range $\cW(A)$ of a matrix tuple $A \in M_n^d$, it follows that $\cW(A)$ is the polar dual of the free spectrahedron determined by $A$ (as in \cite[Proposition 3.3 and Lemma 3.4]{DDSSa}).

\begin{lemma}\label{lem:OGequivalence_fc_redundant}
If $A \in \cK(H)^d$ is a tuple of compact operators, then $A$ is fully compressed if and only if $A$ is multiplicity-free and the Shilov ideal of $S_A$ in $C^*(S_A)$ is trivial.
\end{lemma}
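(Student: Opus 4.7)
My plan is to prove the two directions separately, handling the forward direction by explicit contrapositive constructions and the reverse direction via a combination of minimality, nonsingularity (extracted from Shilov triviality), and a compactness-based rigidity argument.

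For the forward direction, I argue by contrapositive. If $A$ is not multiplicity-free, write $A \cong C \oplus B \oplus B$ with $B$ irreducible; the diagonal isometry $H_B \hookrightarrow H_B \oplus H_B$, $h \mapsto (h,h)/\sqrt{2}$, extended by the identity on the $C$-part, gives a proper compression of $A$ equal to $C \oplus B$, which has the same matrix range as $A$ since $\cW(B \oplus B) = \cW(B)$. Otherwise, assume $A = \bigoplus_i A_i$ is multiplicity-free but that the Shilov ideal is nontrivial. Then some irreducible representation $\pi_{j_0}$ of $C^*(S_A)$ is not in $\partial_A$, and the ideal $\cK(H_{j_0}) \triangleleft C^*(S_A)$ is annihilated by every $\pi_i$ with $i \neq j_0$ (different summand) and by $\pi_0$ (vanishes on compacts). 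Hence $\cK(H_{j_0})$ is a boundary ideal, the corresponding quotient is completely isometric on $S_A$, and it realizes the compression of $A$ to the proper reducing subspace $H \ominus H_{j_0}$, contradicting fully compressed.

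For the reverse direction, the hypotheses first imply that $A$ is minimal (any proper summand of $A$ with matching matrix range would reverse the above boundary-ideal analysis to produce some $\pi_{j_0} \notin \partial_A$, contradicting Shilov trivial) and nonsingular (Shilov trivial makes $\bigoplus_{\pi \in \partial_A} \pi$ injective, and hence isometric as a $*$-homomorphism of C*-algebras, so $\|\pi_0(s_{ij})\| \leq \sup_{\pi \in \partial_A} \|\pi(s_{ij})\|$ for every $(s_{ij}) \in M_n(S_A)$). Suppose now for contradiction that $B = P_G A|_G$ has $\cW(B) = \cW(A)$ with $G \subsetneq H$. Applying \cite[Corollary 6.8]{DDSSa} to the compact tuple $B$ produces a minimal nonsingular summand $B' = P_{G'} A|_{G'}$ of $B$ on a reducing subspace $G' \subseteq G \subseteq H$, with $\cW(B') = \cW(A)$. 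Theorem \ref{thm:quotingminsingunique} then forces $A$ and $B'$ to be unitarily equivalent.

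In finite dimensions, this equivalence yields $\dim G' = \dim H$, so $G' = H$ and hence $G = H$, contradicting $G \subsetneq H$. In infinite dimensions the unitary equivalence produces an isometry $V : H \to H$ with image $G'$ satisfying $V^* A_i V = A_i$ for all $i$; the plan is to show $V$ must be unitary. Exploiting compactness, the spectral subspaces of each $A_i^* A_i$ for nonzero eigenvalues are finite-dimensional, and the rigidity furnished by $V^* A_i V = A_i$ and $V^* A_i^* V = A_i^*$ (in the Shilov-trivial setting) forces $V$ to preserve each such subspace and to act unitarily on each. Summing gives that $V$ is unitary on $\overline{\mathrm{range}(A_i^*)} = (\ker A_i)^\perp$, and similarly on $\overline{\mathrm{range}(A_i)} = (\ker A_i^*)^\perp$, for every $i$. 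Minimality of $A$ (via Lemma \ref{lem:findimminfc}, which excludes any null reducing subspace) implies that the closed linear span of $\bigcup_i \overline{\mathrm{range}(A_i)} \cup \overline{\mathrm{range}(A_i^*)}$ is all of $H$. Hence $V$ is unitary on $H$, so $G' = H$, the desired contradiction.

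The main obstacle is the infinite-dimensional step, specifically the delicate promotion of the intertwining relation $V^* A_i V = A_i$ into full control of how $V$ acts on the spectral decomposition of each $A_i^* A_i$, which requires carefully combining compactness with the rigidity forced by Shilov triviality. Once this structural control on $V$ is established, the minimality of $A$ does the closing work of excluding a nontrivial cokernel for $V$.
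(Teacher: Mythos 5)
There are two genuine gaps, one in each direction.

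In the forward direction, your reduction of a nontrivial Shilov ideal to a proper compression fails in exactly the case that caused the original error in \cite{DDSS}. If $H_{j_0}$ is infinite-dimensional and $G_1 := H \ominus H_{j_0}$ is finite-dimensional, the quotient $C^*(S_A)/\cK(H_{j_0})$ is \emph{not} realized by the compression to $G_1$: the kernel of $x \mapsto P_{G_1}x|_{G_1}$ on $C^*(S_A) = \bC I + \cK(H) \cap C^*(S_A)$ then strictly contains $\cK(H_{j_0})$ (it contains $P_{H_{j_0}}$), and the quotient is $\bC \oplus \bigl(\bC I_{G_1} + \bigoplus_{i \neq j_0}\cK_i\bigr)$ with $A$ mapping to $0 \oplus X$, where $X = P_{G_1}A|_{G_1}$. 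Since $\cW(X)$ can be strictly smaller than $\cW(0\oplus X) = \cW(A)$ (when $0 \notin \cW(X)$, as in \cite[Example 3.14]{Passer}), the compression to $H \ominus H_{j_0}$ need not preserve the matrix range. One must instead compress to $F \oplus G_1$ for a proper infinite-dimensional subspace $F \subset H_{j_0}$, so that the singular representation of the remaining infinite-dimensional piece supplies the missing $0$ summand; this is the separate case in the paper's proof.

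In the reverse direction, the decisive step is missing. From $V^*A_iV = A_i$ you cannot conclude that $V$ respects the spectral subspaces of $A_i^*A_i$: the Schwarz inequality for the UCP map $x \mapsto V^*xV$ gives only $V^*A_i^*A_iV \geq A_i^*A_i$, and equality is precisely what has to be proved. The entire difficulty is promoting the entrywise relation on $S_A$ to multiplicativity on $C^*(S_A)$, and this is where triviality of the Shilov ideal must actually be used: since $C^*(S_A) = C^*_e(S_A)$ and (by multiplicity-freeness) the identity representation is a direct sum of boundary representations, the compression to $\operatorname{ran}V$, which agrees on $S_A$ with a $*$-representation, must agree with it on all of $C^*(S_A)$ by the unique extension property; Sarason's lemma then makes $\operatorname{ran}V$ semi-invariant, hence reducing for the $*$-algebra $C^*(S_A)$, and minimality finishes. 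Your proposal explicitly defers this promotion as ``the main obstacle'' rather than resolving it, so the proof is incomplete at its crux. A secondary issue: you invoke \cite[Corollary 6.8]{DDSSa} to extract a \emph{nonsingular} minimal summand $B'$ of the compression $B$, but that requires knowing $B$ itself is nonsingular, which you have not established ($B$ is a compression of $A$, not a summand, so Lemma \ref{lem:mosummandslessproblems} does not apply).
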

\begin{proof}
If $A$ is fully compressed, then it is clearly multiplicity-free. Therefore, we may find a decomposition $H = \bigoplus\limits_{i \in I} H_i$ with $C^*(S_A) =  \bC I_H +  \bigoplus\limits_{i \in I} \cK_i$, where $\cK_i = \cK(H_i)$ is the space of compact operators on $H_i$. If $H$ is finite-dimensional, the presence of the unit is redundant. The Shilov ideal $J \triangleleft C^*(S_A)$ has the form $\bigoplus\limits_{i \in I_0} \cK_i$ for some $I_0 \subseteq I$, so we define $I_1 = I \setminus I_0$ and $G_k = \bigoplus\limits_{i \in I_k} H_i$ for $k \in \{0, 1\}$.

If $G_0$ is finite-dimensional, or if $G_0$ and $G_1$ are both infinite-dimensional, then the quotient $C^*_e(S_A) = C^*(S_A)/J$ is $*$-isomorphic to $\bC I_{G_1} + \bigoplus\limits_{i \in I_1} \cK_i$, where the sum is taken in $\cB(G_1)$. It follows that the compression onto $G_1$ is completely isometric on $S_A$, and hence $\cW(A) = \cW(P_{G_1} A|_{G_1})$. Since $A$ is fully compressed, we conclude that $G_1 = H$, and $J$ is trivial.

Suppose instead that $G_0$ is infinite-dimensional and $G_1$ is finite-dimensional. In this case,
\begin{equation}\label{eq:thelastenvelope}
C^*_e(S_A)  \,\, = \,\, C^*(S_A) /J \,\, \cong \,\, \bC I_H  + \bigoplus_{i \in I_1} \cK_i  \,\, \cong \,\, \bC \oplus \bigoplus_{i \in I_1} \cK_i,
\end{equation}
where the first sum is in $\cB(H)$ and the second sum is an external direct sum. Define $X = P_{G_1} A|_{G_1}$, so that the image of $A$ through (\ref{eq:thelastenvelope}) is $0 \oplus X$, and hence $\cW(A) = \cW(0 \oplus X)$. If $F$ is an infinite-dimensional proper subspace of $G_0$, and $B$ is the compression of $A$ to the subspace $F \oplus G_1 \subset H$, then there is a UCP map sending $B \mapsto 0 \oplus X$. We conclude that
\[
\cW(0 \oplus X) \subseteq \cW(B) \subseteq \cW(A) = \cW(0 \oplus X),
\]
and hence $\cW(B) = \cW(A)$. This contradicts the assumption that $A$ is fully compressed, so this case does not occur.

Next, we consider the converse. If $A$ is multiplicity-free and the Shilov ideal is trivial, then \cite[Proposition 6.7]{DDSSa} implies that $A$ is minimal. (Note that the portion of \cite[Proposition 6.7]{DDSSa} we are actually using is a carry-over from \cite{DDSS} and does not require nonsingularity). From Lemma \ref{lem:findimminfc}, it follows that if $A$ acts on a finite-dimensional space, which must happen if $0$ is a summand of $A$, then $A$ is fully compressed.

Assume that $A$ acts on an infinite-dimensional space, so $0$ is not a summand of $A$. Moreover, note that triviality of the Shilov ideal directly implies that $C^*(S_A) = C^*_e(S_A)$. 
Suppose that $B$ is a compression of $A$ to a closed subspace $G \subseteq H$, with $\cW(A) = \cW(B)$. Since $S_A$ and $S_B$ are then completely isometrically isomorphic, the universal property of $C^*_e(S_A)$ gives rise to a surjective $*$-homomorphism $\pi: C^*(S_B) \to C^*(S_A)$ that extends the complete isometry $B \mapsto A$. 
By the representation theory of algebras of compact operators, and keeping in mind that $A$ does not have $0$ as a direct summand, the map $\pi$ is a direct sum of representations unitarily equivalent to subrepresentations of the identity representation of $C^*(S_B)$. 
Since $A$ is minimal, every equivalence class of a subrepresentation of the identity representation of $C^*(S_B)$ appears at most once. 
We therefore find that $A$ is unitarily equivalent to a compression of $B$ to a reducing subspace $F \subseteq G$. Since the compression of $A$ to $F$ is a unitary equivalence, it extends to a $*$-representation of $C^*(S_A)$. 
By Sarason's lemma \cite[Lemma 0]{Sarason}, $F$ is semi-invariant for $C^*(S_A)$. 
But a semi-invariant subspace for a C*-algebra is a reducing subspace, so the compression of $A$ to $F$ is actually a direct summand with the same matrix range. Since $A$ is minimal, we conclude that $F = H$, and hence $G = H$. 
That is, $A$ is fully compressed.
\end{proof}

We may now show that fully compressed compact tuples are uniquely determined by their matrix ranges, without using nonsingularity. That is, we may import the original proof technique (sans flaws) from \cite{DDSS}.

\begin{corollary}\label{cor:finaluniqueness_main}
Let $A \in \cK(H_1)^d$ and $B \in \cK(H_2)^d$ be fully compressed tuples of compact operators satisfying $\cW(A) = \cW(B)$. Then $A$ and $B$ are unitarily equivalent.
\end{corollary}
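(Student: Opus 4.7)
The plan is to leverage Lemma \ref{lem:OGequivalence_fc_redundant}, which says fully compressed compact tuples are multiplicity-free with trivial Shilov ideal, and construct the intertwining unitary directly from representation-theoretic data. Since both $A$ and $B$ satisfy these two properties, we have $C^*(S_A) = C^*_e(S_A)$ and $C^*(S_B) = C^*_e(S_B)$. The assumption $\cW(A) = \cW(B)$ yields, by \cite[Theorem 5.1]{DDSS}, a unital complete order isomorphism $S_A \to S_B$ sending $A_i \mapsto B_i$, and the universal property of the C*-envelope promotes this to a unital $*$-isomorphism $\varphi : C^*(S_A) \to C^*(S_B)$ with $\varphi(A_i) = B_i$.

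The next step is to realize $\varphi$ spatially via the structure theory of algebras of compact operators. Multiplicity-freeness lets us write $C^*(A) = \bigoplus_\alpha \cK(H_\alpha)$ as a concrete subalgebra of $\cK(H_1)$, corresponding to an orthogonal decomposition $H_1 = H_0 \oplus \bigoplus_\alpha H_\alpha$ on which $A$ vanishes on $H_0$ and $\dim H_0 \in \{0,1\}$; the analogous decomposition holds for $B$ with $H_2 = H'_0 \oplus \bigoplus_\beta H'_\beta$. The restriction $\varphi|_{C^*(A)}$ is a $*$-isomorphism onto $C^*(B)$; since the minimal non-zero closed ideals on either side are the individual compact summands, $\varphi$ must match them bijectively $\alpha \mapsto \beta(\alpha)$ with $\dim H_\alpha = \dim H'_{\beta(\alpha)}$, and each factor $\cK(H_\alpha) \to \cK(H'_{\beta(\alpha)})$ is spatially implemented by a unitary $U_\alpha : H_\alpha \to H'_{\beta(\alpha)}$. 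Projecting the relation $\varphi(A_i) = B_i$ onto each summand forces $U_\alpha$ to intertwine the $\alpha$-th irreducible component of $A$ with the $\beta(\alpha)$-th irreducible component of $B$. Thus $U_1 := \bigoplus_\alpha U_\alpha$ is a unitary $\bigoplus_\alpha H_\alpha \to \bigoplus_\beta H'_\beta$ that intertwines the non-zero parts of $A$ and $B$.

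The final and most delicate step is to match $H_0$ with $H'_0$, and this is where the fully compressed hypothesis enters crucially. Suppose for contradiction that $\dim H_0 = 1$ while $\dim H'_0 = 0$. Then $A = A' \oplus 0$ with $A' := A|_{\bigoplus_\alpha H_\alpha}$ unitarily equivalent to $B$ via $U_1$, so $\cW(A) = \cW(B) = \cW(A')$. This contradicts $A$ being fully compressed, since $\bigoplus_\alpha H_\alpha$ is a proper subspace of $H_1$; the opposite asymmetry is ruled out symmetrically by full compression of $B$. (In particular, by Lemma \ref{lem:findimminfc}, this asymmetric scenario can only even be contemplated in finite dimensions.) Hence $\dim H_0 = \dim H'_0$, and extending $U_1$ by any unitary $H_0 \to H'_0$ produces the desired $U : H_1 \to H_2$ with $UAU^* = B$.

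The main obstacle is correctly accounting for the singular representation $\pi_0$, which does not live inside the ideal $C^*(A)$ and therefore is not directly transported by $\varphi|_{C^*(A)}$. The approach above isolates this issue in the residual $0$-summand subspace and uses full compression as a purely geometric consistency check to rule out an asymmetric $0$-summand; all the heavy lifting elsewhere reduces to the rigid structure theory of $\bigoplus_\alpha \cK(H_\alpha)$, and at no point do we invoke nonsingularity.
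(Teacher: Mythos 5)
Your proof is correct and follows essentially the same route as the paper: both pass through Lemma \ref{lem:OGequivalence_fc_redundant} to identify $C^*(S_A)$ and $C^*(S_B)$ with their C*-envelopes, promote the complete isometry $A \mapsto B$ to a $*$-isomorphism, and then use the representation theory of algebras of compact operators together with multiplicity-freeness to see that this isomorphism is spatially implemented. Your only real deviation is ruling out an asymmetric zero summand directly from the fully compressed hypothesis, where the paper instead reduces that case to finite dimensions via Lemma \ref{lem:findimminfc}; both arguments work.
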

\begin{proof}
Suppose that $\cW(A) = \cW(B)$. Then $A \mapsto B$ extends to a unital completely isometric isomorphism of $S_A$ onto $S_B$, and therefore it extends to a $*$-isomorphism of the corresponding C*-envelopes.
By Lemma \ref{lem:OGequivalence_fc_redundant}, $C^*(S_A) = C^*_e(S_A)$ and $C^*(S_B) = C^*_e(S_B)$.
We therefore have a $*$-isomorphism $\pi : C^*(S_A) \to C^*(S_B)$, which must be a direct sum of subrepresentations of the identity representation and perhaps the singular representation.

Let us first assume that $0$ is not a direct summand of either tuple.
Then $\pi$ is the sum of subrepresentations of the identity representation.
Each one of these representations appears at most once in the sum because $B$ is fully compressed and consequently multiplicity-free.
On the other hand, since $A$ does not have $0$ as a direct summand, each subrepresentation of the identity must appear at least once, since $\pi$ is injective.
We see that $\pi$ must be implemented by a unitary equivalence, as required.

Next, assume that $0$ is a direct summand of one of the tuples, say $A$. By Lemma \ref{lem:findimminfc}, $A$ then acts on a finite dimensional space, and it follows that $B$ also acts on a finite dimensional space. Arguing as above, we find that $\pi$ must be implemented by a unitary equivalence.
\end{proof}

Alternatively, recall that \cite[Proposition 6.7]{DDSSa} relates the same two conditions used above, multiplicity-free and trivial Shilov ideal, to minimality and nonsingularity. However, that result is not an equivalence. We may now extend both that result and Lemma \ref{lem:OGequivalence_fc_redundant} in the following theorem.

\begin{theorem}\label{thm:roundrobin_body}
Let $A \in \cK(H)^d$ be a tuple of compact operators. Then the following are equivalent.
\begin{enumerate}
\item\label{it:fullcommain} $A$ is fully compressed.
\item\label{it:mfShmain} $A$ is multiplicity-free, and the Shilov ideal of $S_A$ in $C^*(S_A)$ is trivial.
\item\label{it:msmain} $A$ is minimal and nonsingular.
\end{enumerate}
\end{theorem}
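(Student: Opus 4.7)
The plan is to close the round-robin using pieces already assembled. Lemma \ref{lem:OGequivalence_fc_redundant} furnishes (\ref{it:fullcommain}) $\iff$ (\ref{it:mfShmain}), and \cite[Proposition 6.7]{DDSSa} supplies (\ref{it:msmain}) $\Rightarrow$ (\ref{it:mfShmain}). What remains, and what is the genuinely new content, is the implication (\ref{it:fullcommain}) $\Rightarrow$ (\ref{it:msmain}), which I plan to establish directly using the structural Theorem \ref{thm:minsingchar}.

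First, I would observe that a fully compressed tuple is trivially minimal, since reducing subspaces are in particular closed subspaces, so the only content of (\ref{it:msmain}) not immediate from (\ref{it:fullcommain}) is nonsingularity. I would argue by contradiction: suppose $A$ is fully compressed but singular. Because $A$ is also minimal, Theorem \ref{thm:minsingchar} decomposes $A \cong \bigoplus_{i=1}^N X^{(i)} \oplus Y$ with $Y$ an irreducible compact tuple on an infinite-dimensional Hilbert space $H_Y$, and with $\cW(A) = \cW\bigl(\bigoplus_{i=1}^N X^{(i)} \oplus 0\bigr)$.

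The heart of the argument is to manufacture a proper compression of $A$ with the same matrix range. I would pick any proper infinite-dimensional closed subspace $M \subsetneq H_Y$ (for instance, the orthogonal complement of a single unit vector) and set $G = \bigl(\bigoplus_{i=1}^N \operatorname{supp}(X^{(i)})\bigr) \oplus M \subsetneq H$. Then $P_G A|_G = \bigoplus_{i=1}^N X^{(i)} \oplus P_M Y|_M$, where $P_M Y|_M$ is still an infinite-dimensional compact tuple on $M$. Its singular representation yields $0 \in \cW_1(P_M Y|_M)$, and matrix convexity promotes this to the scalar matrices $0_n \in \cW_n(P_M Y|_M)$ at every level. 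Consequently $\cW(P_G A|_G)$ contains both $\cW\bigl(\bigoplus X^{(i)}\bigr)$ and every $0_n$, so its closed matrix convex hull is all of $\cW(A)$. Combined with the automatic reverse inclusion, this forces $\cW(P_G A|_G) = \cW(A)$, contradicting the fully compressed hypothesis.

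The main obstacle is arranging the compression so that $0$ is preserved in the matrix range at every level. Taking $M$ finite-dimensional would not suffice, since a compact tuple on a finite-dimensional space need not contain $0$ in its matrix range; insisting that $M$ be infinite-dimensional and proper keeps $P_M Y|_M$ an infinite-dimensional compact tuple, so the singular representation continues to witness $0$. Once this is in place, the remaining matrix convexity verification is routine, and the contradiction with full compression delivers nonsingularity, closing the round-robin.
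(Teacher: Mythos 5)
Your proposal is correct and follows essentially the same route as the paper: the paper's proof also cycles through (\ref{it:fullcommain}) $\Rightarrow$ (\ref{it:msmain}) via Theorem \ref{thm:minsingchar} (compressing the infinite-dimensional summand $Y$ to a proper infinite-dimensional subspace so that $0$ survives in the matrix range), (\ref{it:msmain}) $\Rightarrow$ (\ref{it:mfShmain}) via \cite[Proposition 6.7]{DDSSa}, and (\ref{it:mfShmain}) $\Rightarrow$ (\ref{it:fullcommain}) via Lemma \ref{lem:OGequivalence_fc_redundant}. Your write-up merely supplies more detail on why the compressed tuple retains the same matrix range, which is a fine elaboration of the paper's one-line justification.
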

\begin{proof}
(\ref{it:fullcommain}) $\implies$ (\ref{it:msmain}): If $A$ is fully compressed, it is certainly minimal, so we need only show that $A$ is nonsingular. Theorem \ref{thm:minsingchar} shows that no singular compact tuple is fully compressed, as we may form a compression $A^\prime = \bigoplus\limits_{i=1}^N X^{(i)} \oplus Y^\prime$ with the same matrix range, where $Y^\prime$ is an infinite-dimensional compression of $Y$.

(\ref{it:msmain}) $\implies$ (\ref{it:mfShmain}): This is part of \cite[Proposition 6.7]{DDSSa}.

(\ref{it:mfShmain}) $\implies$ (\ref{it:fullcommain}): This is part of Lemma \ref{lem:OGequivalence_fc_redundant}.
\end{proof}
\begin{remark}
Corollary \ref{cor:finaluniqueness_main} can alternatively be deduced from Theorems \ref{thm:quotingminsingunique} and \ref{thm:roundrobin_body}.
\end{remark}

It follows from Theorem \ref{thm:roundrobin_body} that the tuples considered in \cite{Eve18} admit summands which are fully compressed.

\begin{example}\label{ex:Evertfcstuff}
Evert proves in \cite[Theorem 1.2]{Eve18} that if $d \geq 2$,  $T \in \cK(H)^d_{sa}$ has no finite-dimensional reducing subspaces, and $0$ is in the \lq\lq finite interior\rq\rq\hspace{0pt} of the noncommutative convex hull $K_T$, then $K_T$ is a closed matrix convex set which has no absolute extreme points. Since $K_T$ is also dense in $\cW(T)$ (see e.g. the explanation given in \cite[\S 1.3.1]{Eve18}), it immediately follows that $K_T = \cW(T)$ in this case. The assumption that $0$ is in the finite interior means that $0$ is a compression of $\bigoplus\limits_{1=i}^N T$ for some positive integer $N$, so from Proposition \ref{prop:zerocompression}, we have that $T$ is nonsingular. $T$ need not be minimal, but \cite[Corollary 6.8]{DDSSa} shows that $T$ admits a summand $T^\prime$ which is minimal and has $\cW(T^\prime) = \cW(T)$, and the proof produces $T^\prime$ which is also nonsingular. We conclude from Theorem \ref{thm:roundrobin_body} that $T^\prime$ is fully compressed.
\end{example}

Of course, the simplest examples which meet Evert's conditions are irreducible, and one may produce many such examples by modifying the coefficients used in \cite[Proposition 4.1]{Eve18}. Generally speaking, the assumption that two compact tuples be irreducible is very different from the assumption that they be fully compressed, so it is interesting that irreducible compact tuples are indeed always fully compressed (in particular, they are automatically minimal, and they cannot be singular as they do not fit into the mold of Theorem \ref{thm:minsingchar}). 

It should be noted that  irreducible compact operators are easily determined uniquely by their matrix ranges, as this was part of the study initiated by Arveson. Indeed, in \cite[Theorem 2.4.3]{Arv72}, Arveson showed that two irreducible GCR operators which have trivial Shilov ideals are unitarily equivalent if and only if their matrix ranges are equal. For compact operators, the Shilov ideal requirement is automatically satisfied (and the uniqueness result for compact operators is stated explicitly in \cite{Arv70}). From Arveson's theorem, and using the structure theorem for compact operators, one obtains a general classification theorem for compacts: {\em if $A = \bigoplus\limits_{i \in I} A_i$ and $B = \bigoplus\limits_{j \in J} B_j$ are two compact operators written as direct sums of irreducibles, then $A$ is unitarily equivalent to $B$ if and only if up to a bijection one has $I = J$ and $\cW(A_i) = \cW(B_i)$ for all $i \in I$}. 
A similar statement can be made regarding GCR operators that are decomposed into a direct integrals, but one needs to throw in the assumption that every constituent of the integral has trivial Shilov boundary. 
Recent classification theorems are somewhat different in spirit, as they use the matrix range of a tuple as a single entity.

At the bottom of page 304 in \cite{Arv72}, Arveson points out that the matrix range is not a complete invariant for irreducible operators in general, and he provides examples of non-unitarily equivalent, irreducible GCR operators having the same matrix range. In fact, he observes that if $S, T \in \cB(H)$ are contractions, both of which have spectrum that contains the unit circle, then $\cW(S) = \cW(T)$. By considering compressions of the unilateral shift to suitable subspaces \cite[p. 207]{Arv69}, one obtains such operators that are also irreducible (and even GCR). One can also readily see that all these examples satisfy $\cW(T) = \Wmin{}(\ol{\bD})$. In the following example, we present another large family of non unitarily-equivalent irreducible tuples with the same matrix ranges.

\begin{example}
Let $e_1, e_2, \ldots$ be the standard basis of $\ell^2(\bZ_+)$, and let $w_1, w_2, \ldots$ be another orthonormal basis satisfying the following technical conditions.
\begin{itemize}
\item For every $j$, $\langle w_1, e_j \rangle \not= 0$. $\begin{array}{c} \vspace{.2 cm} \end{array}$
\item $\lim\limits_{j \to \infty} \langle w_j, e_j \rangle = 1$.
\item The sum $R(i) := \sum\limits_{j=1}^\infty |\langle w_i, e_j \rangle|$ is finite for every $i$, and $\lim\limits_{i \to \infty} R(i) = 1$.
\item The sum $C(j) := \sum\limits_{i=1}^\infty |\langle w_i, e_j \rangle|$ is finite for every $j$, and $\lim\limits_{j\to\infty} C(j) = 1$.
\end{itemize}
Let $T = (T_1, \ldots, T_d)$ be such that $T_1$ is diagonal with respect to the basis $\{e_j\}_{j=1}^\infty$ and $T_2, \ldots, T_d$ are diagonal with respect to the basis $\{w_i\}_{i=1}^\infty$. We may select the eigenvalues in such a way that

\begin{itemize}
\item $T$ is irreducible,
\item $T_i \geq 0$ for each $i$,
\item $T_1 + \ldots + T_d \leq I$, and
\item $0$ and the standard basis vectors belong to $\cW_1(T)$ (but are not compressions of $T$).
\end{itemize}

It follows that $\cW_1(T)$ is precisely equal to the standard simplex $\Delta_d$ in $\bR^d$, and hence $\cW(T)$ is the unique matrix convex set $\Wmin{}(\Delta_d)$ over the simplex. There is enough freedom left in selecting the eigenvalues that we may produce uncountably many choices of $T$ which are not unitarily equivalent.
\end{example}

The matrix range considered above is also a set of the form $\Wmin{}(K)$, similar to Arveson's examples. It is not clear precisely which $K$ can be used in such constructions. Regardless, we note that none of the examples considered are fully compressed, and we are led to the following questions.

\begin{question}
For tuples $A, B \in \cB(H)^d$ which are not necessarily compact, if $A$ and $B$ are fully compressed and $\cW(A) = \cW(B)$, does it follow that $A$ is unitarily equivalent to $B$?
\end{question}

\begin{question}\label{ques:fctype}
Let $\cS$ be a closed and bounded matrix convex set. When can $\cS$ be written as the matrix range of a fully compressed tuple $T \in \cB(H)^d$?
\end{question}

One may immediately apply \cite[Theorem 3.26]{Passer} to see that fully compressed \textit{normal} tuples, which are necessarily also minimal, are uniquely determined by their matrix ranges. Moreover, normal tuples are minimal if and only if they are fully compressed.

\begin{theorem}\label{thm:cmdm_normal}
Let $N \in \cB(H)^d$ be a normal tuple.
Then $N$ is fully compressed if and only if it is minimal. 
In particular, this occurs if and only if there is a compact convex set $K$ such that the set of isolated extreme points $\cI_K$ has $\overline{\emph{ext}(K)} = \overline{\cI_K}$ and $N$ is unitarily equivalent to $\bigoplus\limits_{\lambda \in \cI_K} \lambda$.
\end{theorem}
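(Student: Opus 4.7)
The plan is to reduce everything to the structural content of \cite[Theorem 3.26]{Passer}, which already classifies minimal normal tuples. The implication \lq\lq fully compressed $\implies$ minimal\rq\rq\ is immediate from the definitions, since every reducing subspace is in particular a closed subspace, and this uses nothing about normality. The final \emph{in particular} clause is then just a restatement of \cite[Theorem 3.26]{Passer} once the equivalence is established, so the only genuine work is to show that every minimal normal $N$ must be fully compressed.

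To carry this out, I would first invoke \cite[Theorem 3.26]{Passer} to place $N$ in its minimal model. Setting $K = \conv(\sigma(N))$, that theorem provides $\overline{\text{ext}(K)} = \overline{\cI_K}$ together with a unitary identification $N \cong \bigoplus_{\lambda \in \cI_K} \lambda$ on an orthonormal basis $\{e_\lambda\}_{\lambda \in \cI_K}$ of $H$, in which each $\lambda \in \cI_K$ appears as an eigenvalue of multiplicity one. By \cite[Corollary 4.4]{DDSS} we have $\cW(N) = \Wmin{}(K)$, and Proposition \ref{prop:isolatedEEPAEP} shows that each $\lambda \in \cI_K$ is an absolute extreme point of $\cW(N)$, and in particular an extreme point of $\cW_1(N) = K$. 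Now suppose $G \subseteq H$ is a closed subspace with $\cW(P_G N|_G) = \cW(N)$. Fixing $\lambda \in \cI_K$, we have $\lambda \in K = \cW_1(P_G N|_G)$, so there is a unit vector $v \in G$ with $\langle N_i v, v \rangle = \lambda_i$ for every $i$. The crucial step is to upgrade this expectation identity to the eigenvector equation $N v = \lambda v$: using the joint spectral measure $E$ of the commuting normal tuple $N$, the scalar probability measure $\mu_v(\cdot) = \langle E(\cdot) v, v \rangle$ on $\sigma(N) \subseteq K$ has barycenter $\lambda$, and since $\lambda$ is extreme in $K$, $\mu_v$ must be the point mass at $\lambda$. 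Hence $v$ lies in the $\lambda$-eigenspace, which by the multiplicity-one assertion of \cite[Theorem 3.26]{Passer} is exactly $\bC e_\lambda$, and consequently $e_\lambda \in G$. Ranging over all $\lambda \in \cI_K$ forces $G = H$, so $N$ is fully compressed.

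The main obstacle I anticipate is precisely the eigenvector extraction just described, namely the passage from $\langle N v, v \rangle = \lambda$ to $N v = \lambda v$. This goes through cleanly only because the minimal model of \cite[Theorem 3.26]{Passer} places each extremal eigenvalue in a one-dimensional eigenspace; without that structural input, extremality of $\lambda$ in $\cW_1(N)$ by itself would not be enough to pin down $v$ as a particular basis vector, and one would be left having to rule out mixed behavior within a higher-multiplicity eigenspace. Once this single barycenter/extremality step is in place, the rest of the argument is essentially bookkeeping on the explicit diagonal model.
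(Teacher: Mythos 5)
Your overall strategy (reduce to the minimal model of \cite[Theorem 3.26]{Passer} and show each eigenvector $e_\lambda$ must lie in $G$) is the same as the paper's, but there is a genuine gap at the pivotal step: from $\lambda \in \cW_1(P_G N|_G)$ you conclude that \emph{there is a unit vector $v \in G$ with $\langle N_i v, v\rangle = \lambda_i$ for all $i$}. This is not justified. The first level $\cW_1(P_G N|_G)$ is the set of values of \emph{states} on the operator system $S_{P_G N|_G}$, i.e.\ the closed convex hull of the joint spatial numerical range of the compression --- not the joint numerical range itself. A point of $\cW_1$ may be attained only by a singular state (a weak-$*$ limit of vector states) and by no unit vector at all; for instance, a diagonal self-adjoint with eigenvalues $1-\tfrac1n$ has $1$ in $\cW_1$ but no unit vector realizing it. Extremality of $\lambda$ in $K$ gives you, via Milman's converse, only a sequence of unit vectors $g_n \in G$ with $\langle N_i g_n, g_n\rangle \to \lambda_i$, not exact attainment. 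Everything downstream of this step (Bauer's theorem forcing $\mu_v = \delta_\lambda$, hence $v \in \bC e_\lambda$) is fine \emph{conditional} on exact attainment, but the conditional hypothesis is exactly what fails.

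The gap is repairable, and the repair is essentially the paper's proof: work with the approximating sequence $g_n$ and show $|\langle g_n, e_\lambda\rangle| \to 1$, so that $e_\lambda \in G$ because $G$ is closed. For this, bare extremality of $\lambda$ is no longer enough --- you need $\lambda$ to be \emph{isolated} in $\mathrm{ext}(K)$ (which the minimal model does give you), so that $\lambda \notin \overline{\conv}\bigl(\cI_K \setminus \{\lambda\}\bigr)$ and hence there is an $\bR$-affine functional $\Phi$ with $\Phi(\lambda)=1$ and $|\Phi(\gamma)| \le r < 1$ uniformly for the other eigenvalues $\gamma$. Writing $g_n = b_n + c_n$ along $\bC e_\lambda \oplus (\bC e_\lambda)^\perp$ then yields $\langle \Phi(N) g_n, g_n\rangle \le \|b_n\|^2 + r\|c_n\|^2$, forcing $\|b_n\| \to 1$. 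Equivalently, in your measure-theoretic language: if $\mu_{g_n}(\{\lambda\}) \not\to 1$, the barycenters would accumulate at a nontrivial convex combination of $\lambda$ with a point of the compact convex set $\overline{\conv}(\cI_K\setminus\{\lambda\})$, contradicting that $\lambda$ is extreme and separated from that set. Either way, you must invoke the isolation of $\lambda$ (or the uniform spectral gap it provides), which your write-up never uses.
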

\begin{proof}
We need only prove that minimal normal tuples are fully compressed. If $N \in \cB(H)^d$ is normal and minimal for its matrix range, then from \cite[Theorem 3.26]{Passer}, we have that $N$ is a multiplicity-free diagonal operator, the eigenvalues of $N$ lie at the set $\cI_K$ of isolated extreme points of some compact convex set $K$, and $K$ satisfies $\ol{\cI_K} = \ol{\operatorname{ext}(K)}$. 
Given a joint eigenvalue $\lambda \in \cI_K$, let $v_\lambda$ denote a corresponding eigenvector.

Let $G \subseteq H$ be a closed subspace with $\cW(P_G N|_G) = \cW(N)$, so in particular $K = \cW_1(P_G N|_G)$. 
Fix $\lambda \in \cI_K$, and fix an $\bR$-affine transformation $\Phi: \bC^d \to \bR$ and a number $0 < r < 1$ such that $\Phi(\lambda) = 1$ but $-r \leq \Phi(\gamma) \leq r$ for any $\gamma \in \cI_K \setminus \{\lambda\}$. 
Next, let $A := \Phi(N) \in \cB(H)_{sa}$, so that $A$ is a diagonal self-adjoint operator. In particular, $A$ has a joint eigenvalue $1$ at $v_\lambda$ and eigenvalues of magnitude at most $r$ at $v_\gamma$, $\gamma \not= \lambda$.

Since $\lambda \in \cW_1(P_G N|_G)$, we have that $1 \in \cW_1(P_G A|_G)$. 
Keeping in mind that $P_G A|_G$ is a self-adjoint contraction, it follows that we may find unit vectors $g_n \in G$ such that $\langle A g_n, g_n \rangle \to 1$. 
However, if one applies the decomposition $\cH = \bC v_\lambda \oplus (\bC v_\lambda)^\perp$ (where both subspaces are reducing for $A$) to obtain $g_n = b_n + c_n$, it follows that
\bes
\langle A g_n, g_n \rangle = \langle A b_n, b_n \rangle + \langle A c_n, c_n \rangle \leq ||b_n||^2 + r ||c_n||^2.
\ees
Since $|r| < 1$ and $||b_n||^2 + ||c_n||^2 = 1$, we must have that $||b_n|| \to 1$ and $||c_n|| \to 0$. 
That is, after a unimodular rescaling, $g_n$ converges to $v_\lambda$, and hence $v_\lambda \in G$. This applies to each eigenvector $v_\lambda$, so $G = H$, and finally, $N$ is fully compressed.
\end{proof}

A fully compressed normal tuple is the direct sum of the isolated extreme points of the first level of its matrix range $\Wmin{}(K)$, and in particular, these points are the crucial matrix extreme points of $\Wmin{}(K)$. In general, however, a fully compressed tuple need not be the direct sum of crucial matrix extreme points, as seen in Example \ref{ex:Evertfcstuff}. That is, the boundary representations of the corresponding operator system might be exclusively infinite-dimensional.

\vskip 5pt

\noindent{\bf Acknowledgement.} We thank the anonymous referee for insightful comments.

\bibliographystyle{amsplain}

\begin{thebibliography}{99}

%

\bibitem{Arv69}
W. Arveson,
\emph{Subalgebras of $C^*$-algebras},
Acta Math.  {123} (1969), 141--224.

\bibitem{Arv70} W. Arveson, 
\emph{Unitary invariants for compact operators}, 
Bull. Amer. Math. Soc. {76} (1970), 88--91.

\bibitem{Arv72}  W. Arveson,
\emph{Subalgebras of C*-algebras. II.},
Acta Math.  {128} (1972), 271--308.

\bibitem{ArvNote} W. Arveson,
\emph{Notes on the unique extension property},
unpublished note (2003).
Link: http://users.uoa.gr/~akatavol/newtexfil/arveson/unExt.pdf .

\bibitem{ArvChoquet1} W. Arveson,
\emph{The noncommutative {C}hoquet boundary},
J. Amer. Math. Soc. {21} (2008), 1065--1084.

\bibitem{Choi75} M.D. Choi,
\emph{Completely Positive Linear Maps on Complex Matrices}
Linear Algebra Appl. 10 (1975), 285--290.

\bibitem{DavBook} K.R. Davidson,
\emph{C*-Algebras by Example},
American Mathematical Society, 1996.

\bibitem{DDSS} K.R. Davidson, A. Dor-On, O.M. Shalit and B. Solel,
\emph{Dilations, inclusions of matrix convex sets, and completely positive maps},
Int. Math. Res. Not. {2017} (2017), 4069--4130.

\bibitem{DDSSa} K.R. Davidson, A. Dor-On, O.M. Shalit and B. Solel,
\emph{Dilations, inclusions of matrix convex sets, and completely positive maps},
updated and corrected arXiv version: arXiv:1601.07993v3 [math.OA] (2018).

\bibitem{DavKen15} K.R. Davidson and M. Kennedy,
\textit{The Choquet boundary of an operator system},
Duke J. Math. {164} (2015), 2989--3004.

\bibitem{EW97} E.G. Effros and S. Winkler,
\emph{Matrix convexity: Operator analogues of the bipolar and Hahn-Banach theorems},
J. Funct. Anal.  {144} (1997), 117--152.

\bibitem{Eve18} E. Evert,
\emph{Matrix convex sets without absolute extreme points},
Linear Algebra Appl. {537} (2018), 287--301.

\bibitem{EH18} E. Evert and J. William Helton,
\emph{Arveson extreme points span free spectrahedra},
preprint, arXiv:1806.09053.

\bibitem{EHKM16} E. Evert, J.W. Helton, I. Klep and S. McCullough,
\emph{Extreme points of matrix convex sets, free spectrahedra, and dilation theory},
J. Geom. Anal. {28} (2018) 1373--1408.

\bibitem{Far00} D.R. Farenick,
\emph{Extremal matrix states on operator systems},
J. London Math. Soc. {61} (2000), 885--892.

\bibitem{FNT} T. Fritz, T. Netzer and A. Thom
\emph{Spectrahedral containment and operator systems with finite-dimensional realization},
preprint, arXiv:1609.07908.

\bibitem{HKM13} J.W. Helton, I. Klep and S. McCullough,
\emph{The matricial relaxation of a linear matrix inequality},
Math. Program. {138} (2013), 401--445.

\bibitem{Kriel} T.-L. Kriel,
\emph{An introduction to matrix convex sets and free spectrahedra},
preprint, arXiv:1611.03103v6.

\bibitem{Passer} B. Passer,
\emph{Shape, scale, and minimality of matrix ranges}, preprint. To appear in Trans. Amer. Math. Soc.

\bibitem{PSS18} B. Passer, O.M. Shalit and B. Solel,
\emph{Minimal and maximal matrix convex sets},
J. Funct. Anal.  {274}  (2018), 3197--3253.

\bibitem{Sarason} D. Sarason, 
\emph{On spectral sets having connected complement}, 
Acta Sci. Math. (Szeged) 26 (1965), 289--299. 

\bibitem{WW99} C. Webster and S. Winkler,
\emph{The Krein-Milman theorem in operator convexity},
Trans. Amer. Math. Soc. {351} (1999): 307--322.

\bibitem{Zalar}
A. Zalar.
\emph{Operator positivestellensatze for noncommutative polynomials positive on matrix convex sets},
J. Math. Anal. Appl. {445} (2017): 32--80.


\end{thebibliography}

\end{document}